\documentclass[12pt,oneside,reqno]{amsart}
\usepackage{graphicx,amssymb,amsfonts,amsmath,amsthm,newlfont}
\usepackage[dvipsnames]{xcolor}
\usepackage{mathrsfs} 
\usepackage{bbm} 
\usepackage{mathtools} 
\usepackage{tikz-cd}
\usepackage{enumerate}
\usepackage[tracking]{microtype}
\usepackage{booktabs}

\usepackage[colorlinks=true,linkcolor=red,citecolor=blue]{hyperref}
\usepackage{orcidlink}

\usepackage[backend=bibtex,style=alphabetic,maxnames=9]{biblatex}
\renewbibmacro{in:}{}
\newbibmacro{string+doiurlisbn}[1]{%
  \iffieldundef{doi}{%
    \iffieldundef{url}{%
      \iffieldundef{isbn}{%
        \iffieldundef{issn}{%
          \iffieldundef{eprint}{%
            #1%
          }{%
            \href{https://arxiv.org/abs/\thefield{eprint}}{#1}%
          }
        }{%
          \href{https://books.google.com/books?vid=ISSN\thefield{issn}}{#1}%
      }
      }{%
        \href{https://books.google.com/books?vid=ISBN\thefield{isbn}}{#1}%
      }%
    }{%
      \href{\thefield{url}}{#1}%
    }%
  }{%
    \href{https://doi.org/\thefield{doi}}{#1}%
  }%
}
\DeclareFieldFormat{title}{\usebibmacro{string+doiurlisbn}{\mkbibemph{#1}}}
\DeclareFieldFormat[article,incollection,inproceedings]{title}{\usebibmacro{string+doiurlisbn}{\mkbibquote{#1}}}
\DeclareFieldFormat{postnote}{#1}
\DeclareFieldFormat{eprint}{\href{https://www.arxiv.org/abs/#1}{arXiv:#1}}

\theoremstyle{plain}
\newtheorem{thm}{Theorem}
\newtheorem{lem}[thm]{Lemma}
\newtheorem{cor}[thm]{Corollary}

\newtheorem{remark}[thm]{Remark}
\newtheorem{defn}[thm]{Definition}
\newtheorem{ex}[thm]{Example}

\newcommand{\ZZ}{\mathbbm{Z}}
\newcommand{\QQ}{\mathbbm{Q}}
\newcommand{\RR}{\mathbbm{R}}
\newcommand{\CC}{\mathbbm{C}}
\newcommand{\To}{\longrightarrow}
\newcommand{\hooklongrightarrow}{\lhook\joinrel\longrightarrow}
\DeclareMathOperator{\tr}{tr}
\DeclareMathOperator{\Out}{Out}
\DeclareMathOperator{\Hom}{Hom}
\DeclareMathOperator{\Aut}{Aut}
\DeclareMathOperator{\Ext}{Ext}
\newcommand{\Transpose}{\intercal}

\newcommand{\Graph}[2][1.0]{\vcenter{\hbox{\includegraphics[scale=#1]{fig/#2}}}}

\newcommand{\can}{\mathrm{can}}
\newcommand{\RW}{\mathrm{RW}}
\newcommand{\gm}{\mathfrak{g}^\mm}
\newcommand{\gmr}{\gm_{\RR}}
\newcommand{\hGC}{\mathcal{GC}}
\newcommand{\cGC}{\mathsf{GC}}
\newcommand{\mm}{\mathfrak{m}}
\newcommand{\cochain}{\mathcal{C}}
\newcommand{\Lie}{\mathbb{L}}
\newcommand{\dR}{\mathrm{dR}}
\newcommand{\tone}{\overset{\rightarrow}{1}\!}
\newcommand{\opi}{{}_0 \Pi_{1}}
\newcommand{\MT}{\mathcal{MT}}
\newcommand{\GL}{\mathrm{GL}}
\newcommand{\SL}{\mathrm{SL}}
\newcommand{\Sp}{\mathrm{Sp}}
\newcommand{\sv}{\mathrm{sv}}
\newcommand{\trop}{\mathrm{trop}}
\newcommand{\red}{\mathrm{red}}
\DeclareMathOperator{\gr}{\mathrm{gr}}
\DeclareMathOperator{\Prim}{Prim}
\newcommand{\x}{\mathsf{x}}
\newcommand{\e}{\mathsf{e}}
\newcommand{\comp}{\mathrm{comp}}
\newcommand{\td}{\mathrm{d}} 
\newcommand{\PO}{\mathscr{P}} 
\newcommand{\SPD}{\mathcal{P}} 
\newcommand{\grt}{\mathfrak{grt}}
\newcommand{\ad}{\mathrm{ad}}
\newcommand{\dchB}{{}_0^{}1^B_1}
\newcommand{\dchDR}{{}^{}_01^{\dR}_1}
\newcommand{\defas}{\coloneqq}
\newcommand{\iu}{\mathrm{i}} 
\newcommand{\ipi}{\pi\iu}
\newcommand{\llangle}{\langle\!\langle}
\newcommand{\rrangle}{\rangle\!\rangle}
\newcommand{\kontsevint}{\href{https://bitbucket.org/PanzerErik/kontsevint/}{\texttt{kontsevint}}}

\author{Francis Brown \orcidlink{0000-0002-9295-2572}}
\address{All Souls College, Oxford, OX1 4AL, UK}
\email{francis.brown@all-souls.ox.ac.uk}
\author{Erik Panzer \orcidlink{0000-0002-9897-5812}}
\address{Mathematical Institute, University of Oxford, OX2 6GG, UK}
\email{erik.panzer@maths.ox.ac.uk}
\author{Jean-Luc Portner \orcidlink{0009-0008-3143-9127}}
\address{Mathematical Institute, University of Oxford, OX2 6GG, UK}
\email{jean-luc.portner@maths.ox.ac.uk}

\date{\today}
\title[The motivic Lie algebra embeds into the cohomology of $\GL(\ZZ)$]{The motivic Lie algebra embeds into the cohomology of the general linear group}

\begin{document}

\begin{abstract}
We show that the motivic Lie algebra of mixed Tate motives over $\ZZ$ embeds canonically into the unstable compactly-supported cohomology of locally symmetric spaces for $\GL_g(\ZZ)$, and into the weight-zero compactly-supported cohomology of $\mathcal{A}_g$, the moduli space of principally polarized abelian varieties. Our construction passes through tropical geometry and graph complexes: compactly-supported analogues of the Borel classes pull back via the tropical Torelli map to canonical graph cocycles. The latter were recently identified with cocycles studied previously by Rossi and Willwacher. We combine their results with the theory of single-valued periods to conclude that the cocycles map to generators of the motivic Lie algebra. We also compute the canonical graph cocycles for all graphs with $\leq14$ edges.\vspace{-4mm}
\end{abstract}
\maketitle

\section{Introduction}

The motivic Lie algebra of the title is the graded Lie algebra $\gm$ associated to the category of mixed Tate motives over $\ZZ$. It is a free graded Lie algebra with one generator $\sigma_{2k+1}$ in every odd degree $2k+1\geq 3$. The only known explicit realization of this category is constructed from Deligne’s motivic fundamental groupoid of $\mathbb{P}^1\setminus \{0,1,\infty\}$. Nevertheless, the generators $\sigma_{2k+1}$ originate in algebraic $K$-theory and hence from the stable cohomology of $\GL_g(\ZZ)$. An enduring mystery has been how to reconcile the projective line minus three points with the group $\GL_g(\ZZ)$, which lie in distant parts of the mathematical landscape.

Building on the work of many authors, and especially  refining  recent results in \cite{BCGP}, we show in this paper that the motivic Lie algebra embeds canonically into the compactly-supported cohomology of the locally symmetric spaces associated to $\GL_g(\ZZ)$, 

\begin{thm} \label{intro: thm1} Let $\SPD_g$ denote the space of positive definite symmetric matrices of rank $g$. There is a canonical embedding of the free Lie algebra on generators $\omega_c^{4k+1}$,
\begin{equation} \label{intro:mapomegactoHg}%
    \Lie(\omega^5_c,\omega^9_c,\ldots )
    \hooklongrightarrow
    \bigoplus_{g>0}  H_c^{2g}\big(\SPD_g/\GL_g(\ZZ); \RR\big)
\end{equation}
which maps the generators to classes $\big[\omega^{4k+1}_c\big] \in H_c^{4k+2}(\SPD_{2k+1}/\GL_{2k+1}(\ZZ);\RR)$ that are compactly-supported analogues, shifted by one degree, of the Borel classes (defined below).
\end{thm}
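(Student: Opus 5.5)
The plan is to build the map in \eqref{intro:mapomegactoHg} out of a graded-commutative product on compactly-supported cohomology, and then to prove injectivity by pulling everything back to graph complexes, where the motivic Lie algebra can be detected.

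\textbf{Step 1: the map.} The spaces $\SPD_g/\GL_g(\ZZ)$ carry a block-sum (direct-sum) structure $\SPD_g\times\SPD_h\to\SPD_{g+h}$. Following \cite{BCGP}, I will use it to equip $\bigoplus_g H_c^{\bullet}(\SPD_g/\GL_g(\ZZ);\RR)$ with a bracket, or more precisely a coproduct-dual, graded-commutative structure that makes the degree-$(2g)$ part, suitably shifted, a graded Lie algebra. To do this I pass to the bordification / Satake–type compactification and use the boundary strata associated to parabolics of type $(g,h)$. The generators are defined from the primitive invariant forms $\omega^{4k+1}=\tr\big((X^{-1}dX)^{4k+1}\big)$ on $\SPD_{2k+1}$. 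These give compactly-supported classes $[\omega^{4k+1}_c]\in H_c^{4k+2}$, obtained from a primitive of $\omega^{4k+1}$ cut off near the boundary via a connecting homomorphism, which accounts for the shift by one degree. Sending $\omega^{4k+1}_c$ to these classes and extending by the universal property of the free Lie algebra defines the map. It is canonical because no choices enter beyond the invariant forms.

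\textbf{Step 2: detection via graphs.} Injectivity is the substantial claim. I will use the tropical Torelli map: the moduli space of tropical curves, or equivalently the cellular chains of metric graphs, maps to $\SPD_g/\GL_g(\ZZ)$. Pulling back compactly-supported classes along it lands in Kontsevich's commutative graph complex $\cGC$. Under this pullback $[\omega^{4k+1}_c]$ goes to the canonical graph cocycle, which by the earlier identification equals the Rossi–Willwacher cocycle $\sigma_{2k+1}$ (up to a nonzero scalar). I also check that the pullback is compatible with the Lie structures, using the known bracket on $H^0(\cGC)$, so that the composite is a Lie map $\Lie(\omega_c^5,\dots)\to H^0(\cGC)$.

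\textbf{Step 3: injectivity of the composite.} By Willwacher, $H^0(\cGC)\cong\grt$. It is therefore enough that the images of the $\sigma_{2k+1}$ generate a free Lie subalgebra of $\grt$. Via $\gm\hookrightarrow\grt$ (Brown's theorem), it suffices to show that the Rossi–Willwacher classes map to generators of $\gm$, i.e. that they are nonzero modulo decomposables. This is the step I expect to be the main obstacle. My approach is to compare Rossi–Willwacher's formula, which is given by Kontsevich-type graph integrals, with the coefficients of depth-one parts, and to use single-valued periods: these integrals are single-valued multiple zeta values whose leading coefficient is a nonzero multiple of $\zeta^{\sv}(2k+1)$. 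Nonvanishing of this coefficient identifies each class with $\sigma_{2k+1}$ modulo brackets.

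Since a free Lie algebra maps injectively whenever its generators go to free generators of a free Lie subalgebra, the composite is injective, and hence so is the map \eqref{intro:mapomegactoHg}. The restriction from $\omega^{4k+1}$ to the degrees $5,9,\dots$ reflects that $\sigma_3$ is invisible in this construction: the $g=3$ degree-one class does not arise, matching the vanishing of $\omega^{1}_c$.
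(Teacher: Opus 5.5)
Your architecture (primitives of the Hopf algebra of \cite{BCGP}, pull-back along the tropical Torelli map to $H^0(\cGC_2)$, Portner's identity $I^\can=I^\RW$, then Willwacher's map $\phi$ into the Ihara Lie algebra) matches the paper's. However, Step~3 has a genuine gap. You reduce to showing that the classes ``map to generators of $\gm$, i.e.\ are nonzero modulo decomposables'', and you propose to check only a depth-one leading coefficient. This silently assumes that $\phi_{2k+1}=\phi(\cochain^\can_{4k+1})$ lies in $\gmr$ at all; calling the Rossi--Willwacher class $\sigma_{2k+1}$ presupposes exactly this.

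All you know a priori is $\gm\subseteq\grt_1$. Whether $\gm=\grt_1$ is open, and $\grt_1$ is not known to be free. Showing $\phi_{2k+1}\equiv c_k\,\zeta(2k+1)\ad(\e_0)^{2k}\e_1$ modulo depth $\geq 2$ with $c_k\neq 0$ only proves that $\phi_{2k+1}$ is indecomposable in $\grt_1\otimes\RR$. That does not give freeness. Leading depth-one terms cannot give it either, since the elements $\ad(\e_0)^{2k}\e_1$ already satisfy Ihara--Takao relations in the depth-graded Ihara Lie algebra. Knowing that the integrals are single-valued multiple zeta values would not place $\phi_{2k+1}$ in $\gm\otimes\RR$ either, and this is not established for individual graphs anyway.

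The missing idea is theorem~\ref{thm: maintechnical}:
\begin{itemize}
\item By \cite[Theorem~1.4]{RossiWillwacher:Etingof}, $\psi=\PO\exp^\circ\big(\sum_k\int_0^1\alpha_{2k}\phi_{2k+1}\big)$ satisfies $\psi\circ\mathcal{Z}=\mathcal{Z}^\sigma$.
\item Comparing with $\mathcal{Z}^{\sv}\circ\mathcal{Z}^\sigma=\mathcal{Z}$ and using uniqueness in the Ihara torsor gives $\psi=(\mathcal{Z}^{\sv})^{\circ-1}$. This comes from a real point of $U^{\dR}$, so $\log^\circ\psi\in\widehat{\gmr}$.
\item The Magnus expansion gives $\log^\circ\psi\equiv\sum_k c_{2k}\phi_{2k+1}$ modulo brackets, so $\phi_{2k+1}\in\gmr$ by induction on weight.
\item Corollary~\ref{cor: grlogZ} then gives $[\phi_{2k+1}]=-2(4k+1)\binom{4k}{2k}\zeta(2k+1)[\sigma_{2k+1}]\neq 0$, and freeness follows because $\gm$ is free.
\end{itemize}
To stay inside $\grt_1$ instead, you would need Willwacher's recent result \cite[Theorem~1.2]{Willwacher:freeGRT} together with the non-vanishing of wheel integrals \cite{BrownSchnetz:WheelCan}, not merely $\gm\subseteq\grt_1$.

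Some further corrections:
\begin{itemize}
\item Your closing remark is wrong: $\sigma_3$ is not invisible. The indexing is $\omega_c^{4k+1}$ (genus $2k+1$) $\mapsto\sigma_{2k+1}$. Thus $\omega_c^5$ lives in genus $3$, pulls back to the cocycle on $K_4$ with $I^\can_{K_4}=60\zeta(3)$, and maps to $\phi_3=-60\zeta(3)\sigma_3$. The absent $k=0$ form $\omega^1$ in genus $1$ is what matches the absence of a degree-one generator of $\gm$. The remark also contradicts your own Step~3.
\item In Step~1 the relevant structure is a graded-cocommutative, non-commutative Hopf algebra, and the bracket is the commutator on primitives. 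You need the primitivity of $[\omega_c^{4k+1}]$ \cite[Theorem~1.2]{BCGP}, because $\lambda^*$ is a Lie morphism only on $\Prim\mathcal{H}_c^\Delta$ \cite[Proposition~1.8]{BCGP}.
\item The degree shift comes from the $\RR^\times_{>0}$-scaling, not from a primitive. The class $[\omega_c^{4k+1}]$ is the relative class $(\widetilde\omega^{4k+1},0)$ on the Borel--Serre compactification of the link, on whose boundary the form vanishes when $g=2k+1$; no primitive of $\omega^{4k+1}$ is involved.
\end{itemize}
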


The present result exhibits the motivic Lie algebra directly inside unstable compactly-supported cohomology using natural geometric constructions.
As a consequence, one expects the motivic Lie algebra to appear in a variety of previously unexpected settings, including the cohomology of other arithmetic groups such as the symplectic group. Indeed, the  symplectic analogue is provided by the  moduli space of principally polarised abelian varieties $\mathcal{A}_g$, which may be identified with the  locally symmetric space for $\Sp_{2g}(\ZZ)$. Its  compactly-supported cohomology carries a natural mixed Hodge structure \cite{HodgeII}.

\begin{thm} \label{intro: thm2}
There is a canonical embedding of the same free Lie algebra
\begin{equation*}
    \Lie(\omega^5_c,\omega^9_c,\ldots )
    \hooklongrightarrow
    \bigoplus_{g>0} W_0 H_c^{2g}(\mathcal{A}_g; \RR) 
    .
\end{equation*}
\end{thm}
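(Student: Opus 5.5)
Theorem~\ref{intro: thm2} will be deduced from Theorem~\ref{intro: thm1}, using the known description of weight-zero compactly supported cohomology of $\mathcal{A}_g$ through tropical geometry. The first input is the result of Brandt--Bruce--Chan--Melo--Moreland--Wolfe: for the perfect-cone toroidal compactification of $\mathcal{A}_g$, the boundary complex is the link of the tropical moduli space $\mathcal{A}_g^{\trop}$. Since the compactification is normal crossings up to finite quotients, Deligne's weight spectral sequence identifies
\[
W_0 H_c^{k}(\mathcal{A}_g;\RR) \cong \widetilde{H}^{k-1}\big(L\mathcal{A}_g^{\trop};\RR\big),
\]
where $L\mathcal{A}_g^{\trop}$ is the link of the cone point of $\mathcal{A}_g^{\trop}$, i.e.\ the quotient of the rational closure of $\SPD_g$ (perfect cone decomposition) modulo scaling and $\GL_g(\ZZ)$.

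The second input is the comparison between $L\mathcal{A}_g^{\trop}$ and $\SPD_g/\GL_g(\ZZ)$. The space $\SPD_g/\GL_g(\ZZ)$, projectivised, is the open part of $L\mathcal{A}_g^{\trop}$, and its complement $L\mathcal{A}_g^{\trop,\mathrm{red}}$ consists of classes of positive semidefinite forms of lower rank. The long exact sequence of the pair therefore gives maps
\[
H_c^{2g}\big(\SPD_g/\GL_g(\ZZ)\big)\cong H_c^{2g-1}\big(\SPD_g/\RR_{>0}\GL_g(\ZZ)\big) \To H^{2g-1}\big(L\mathcal{A}_g^{\trop}, L\mathcal{A}_g^{\trop,\mathrm{red}}\big)\To H^{2g-1}\big(L\mathcal{A}_g^{\trop}\big).
\]
These yield a map $H_c^{2g}(\SPD_g/\GL_g(\ZZ);\RR)\to W_0H_c^{2g}(\mathcal{A}_g;\RR)$. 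Composing with the embedding of Theorem~\ref{intro: thm1} gives the desired map, and it is canonical because each ingredient is canonical.

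The main obstacle is injectivity of this composite on the image of $\Lie(\omega_c^5,\ldots)$, since the long exact sequence could kill classes. I would handle it by passing through graph complexes, as in \cite{BCGP}. The tropical Torelli map $\mathcal{M}_g^{\trop}\to\mathcal{A}_g^{\trop}$ pulls back both the compactly supported Borel-type classes and their images in $W_0H_c(\mathcal{A}_g)$ to Kontsevich's graph complex $\cGC$. The resulting diagram commutes:
\[
\Lie(\omega_c^\bullet)\To H_c(\SPD_g/\GL_g(\ZZ)) \To W_0H_c(\mathcal{A}_g) \To W_0H_c(\mathcal{M}_g)\cong H(\cGC).
\]
The first half of this, namely that the composite $\Lie\to H(\cGC)$ is injective, is exactly what is proved for Theorem~\ref{intro: thm1} via the canonical graph cocycles. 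There the generators map to the Rossi--Willwacher cocycles and brackets are compatible, with injectivity coming from single-valued periods and the freeness of $\gm$. Granting that, the composite through $W_0H_c(\mathcal{A}_g)$ is injective, and hence so is the embedding into $\bigoplus_g W_0 H_c^{2g}(\mathcal{A}_g;\RR)$.

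The care needed is in two places. First, one must check that the restriction maps of the pair sequence are compatible with the Lie (wedge-type) product structure used to define brackets; I expect this follows from the fact that the forms used are $\GL_g(\ZZ)$-invariant and extend to the perfect-cone boundary. Second, one must check that the Torelli pullback lands in the same degree; this is a bookkeeping matter with the degree shift $2g$ versus $2g-1$.
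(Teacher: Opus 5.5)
Your architecture is the paper's: detect injectivity in $H^0(\cGC_2)$, then invoke theorem~\ref{thm: maintechnical}. But the two steps you treat as routine carry the actual content, and as you state them they fail.

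\textbf{The commutative square.} The diagram you draw does not exist as written. The Torelli map $\mathcal{M}_g\to\mathcal{A}_g$ is not proper. Tropically, $\lambda$ sends the locus of weighted trees to the cone point, so it does not even induce a map of links; this is why the paper restricts to $L\mathcal{M}_g^{\red}$ and bordifications. Consequently there is no pullback $W_0H_c^\bullet(\mathcal{A}_g)\to W_0H_c^\bullet(\mathcal{M}_g)$. What is available is the map \eqref{PrimHAtoGC}, defined on primitives and constructed in \cite{BCGP}. Call your pair-sequence map $j\colon H_c^{2g}(\SPD_g/\GL_g(\ZZ))\to W_0H_c^{2g}(\mathcal{A}_g)$. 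Two facts are then needed:
\begin{enumerate}[(i)]
\item $j[\omega_c^{4k+1}]$ is primitive for the Hopf structure on $\bigoplus_g W_0H_c(\mathcal{A}_g)$;
\item \eqref{PrimHDeltatoGC} factors through \eqref{PrimHAtoGC}.
\end{enumerate}
Both are results of \cite{BCGP} that require analysing the inflation sequence. The paper stresses that Theorem~\ref{intro: thm2} is not a formal consequence of Theorem~\ref{intro: thm1} for exactly this reason. You must invoke these results; asserting that the diagram commutes is not enough. Moreover, your injectivity argument needs the square to commute on the whole image of $\Lie(\omega^5_c,\omega^9_c,\ldots)$, i.e.\ on brackets and not just on generators. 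So even linear injectivity silently depends on the bracket compatibility you postponed.

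\textbf{Bracket compatibility.} Your proposed reason for it is mistaken. The brackets on both sides are commutators for the Hopf products of \cite{BCGP}, which combine genera $g_1$ and $g_2$ into genus $g_1+g_2$. They are not wedge products of forms, so $\GL_g(\ZZ)$-invariance and extension to the perfect-cone boundary say nothing about them. In fact you do not need $j$ to be multiplicative at all. Instead:
\begin{enumerate}[1.]
\item Define the embedding as the Lie morphism $\Lie(\omega^5_c,\omega^9_c,\ldots)\to\Prim\big(\bigoplus_g W_0H^{2g}_c(\mathcal{A}_g;\RR)\big)$ determined by the primitive classes of the generators.
\item Compose with the Lie morphisms \eqref{PrimHAtoGC} and $\phi$. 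The composite is determined by its values on generators, which are $\phi_{2k+1}$.
\item By theorem~\ref{thm: maintechnical}, the $\phi_{2k+1}$ lie in $\gmr$ and their classes are nonzero multiples of $[\sigma_{2k+1}]$. Hence they freely generate $\gmr$, the composite is an isomorphism onto $\gmr$, and the first map is forced to be injective.
\end{enumerate}
This is the paper's proof, and it needs compatibility only on generators. Your degree bookkeeping ($2g-1$ on both sides) is fine.
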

In Theorems~\ref{intro: thm1} and \ref{intro: thm2}, the target space carries a Hopf algebra structure \cite{BCGP,AMP}. In both cases, the image of the free graded Lie algebra in $\omega_c^{4k+1}$ lands in the subspace of primitive elements and the map is a morphism of graded Lie algebras.

Theorem~\ref{intro: thm2} answers one half of Question~1.17 in \cite{BCGP}; the other half asks if the map in the theorem is surjective onto the subspace of primitive elements (it is for $g<8$). If so, then the motivic Lie algebra is precisely the diagonal part of the primitives in the graded Hopf algebra associated to $\mathcal{A}_g$. 

The proof draws together several previously developed ingredients, including compactly-supported Borel classes, the Hopf algebra structure on the cohomology of the general linear group, and the tropical Torelli map \cite{BMV}. It passes through the moduli spaces of tropical curves $\mathcal{M}_g^{\trop}$, abelian varieties $\mathcal{A}_g^{\trop}$, and the graph complex $\cGC_2$. In particular, each compactly supported Borel class $[\omega_c^{4k+1}]$ pulls back to a graph cohomology class that is represented by a canonical cocycle $\cochain^\can_{4k+1}\in\cGC_2\otimes\RR$ obtained from the canonical integrals $G\mapsto I^\can_G$ of graphs introduced in \cite{BrInvariant}.

The principal new contribution is to apply recent work of the third author \cite{Portner:RWeqCan} which identifies $I^{\can}_G$ with integrals $I^\RW_G$ studied by Rossi and Willwacher in \cite{RossiWillwacher:Etingof}. We relate the latter integrals to the theory of single-valued periods and show that the graph cocycles are motivic (theorem~\ref{thm: maintechnical}). In particular, they thus generate a free Lie subalgebra.
\begin{thm}\label{intro: thm3}
There is a canonical embedding of the free Lie algebra
\begin{equation*}
    \Lie(\omega^5_c,\omega^9_c,\ldots )
    \hooklongrightarrow
    H^0(\cGC_2)\otimes\RR
\end{equation*}
which maps $\omega^{4k+1}_c$ to $[\cochain^\can_{4k+1}]$. More precisely, these classes are motivic: their images in the Ihara Lie algebra $(\Lie(\e_0,\e_1),\{  \ , \  \})\otimes\RR$ lie in, and generate, the motivic Lie subalgebra $\gm\otimes \RR$. Therefore, we obtain a canonical isomorphism
\begin{equation*}
    \Lie(\omega^5_c,\omega^9_c,\ldots ) \otimes\RR
    \cong
    \gm\otimes\RR.
\end{equation*}
\end{thm}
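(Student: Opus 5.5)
We would prove Theorem~\ref{intro: thm3} by pushing the canonical cocycles into the Ihara Lie algebra and checking that their images generate $\gm\otimes\RR$. By Willwacher's theorem, $H^0(\cGC_2)\cong\grt_1$. Composing with the standard injection $\grt_1\hookrightarrow(\Lie(\e_0,\e_1),\{\ ,\ \})$ gives an injective Lie algebra morphism $\Phi\colon H^0(\cGC_2)\otimes\RR\to\Lie(\e_0,\e_1)\otimes\RR$. Because $\Phi$ is injective and a morphism of Lie algebras, the theorem reduces to three facts about the elements $\Phi[\cochain^\can_{4k+1}]$:
\begin{enumerate}[(i)]
\item each lies in $\gm\otimes\RR$;
\item each has nonzero depth-one part, i.e.\ its coefficient of $\ad(\e_0)^{2k}\e_1$ is nonzero;
\item consequently they freely generate $\gm\otimes\RR$.
\end{enumerate}
Step (iii) follows from (i) and (ii) by the usual depth argument. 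The associated graded of $\gm$ for the depth filtration is generated in depth one by the images of the $\sigma_{2k+1}$. Hence any family $\tau_{2k+1}\in\gm_{2k+1}$ with nonzero depth-one leading term is a system of free generators of $\gm\otimes\RR$. This gives $\Lie(\omega_c^5,\omega_c^9,\ldots)\otimes\RR\cong\gm\otimes\RR$, and therefore injectivity of the map into $H^0(\cGC_2)\otimes\RR$. Note that the degree-$(4k+1)$ generator corresponds to $\sigma_{4k+1}$ only; the generators $\sigma_{4k+3}$ must arise from brackets. So I need to check the grading convention relating $\cGC_2$ loop order to Ihara weight, namely $g=2k+1$ loops corresponds to weight $2k+1$.

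The main step is (i). We use the identification $I^\can_G=I^\RW_G$ from \cite{Portner:RWeqCan}, so that $\cochain^\can_{4k+1}=\sum_G I^\RW_G\, G$ up to normalisation. Rossi and Willwacher express these integrals through Kontsevich/Etingof-type configuration-space integrals. In that description, the image of their cocycle in $\grt_1\otimes\RR$ is the element obtained from the logarithm of a Drinfeld associator. Concretely, it is the coefficient extracted from the Alekseev--Torossian (or Kontsevich) associator $\Phi_{\mathrm{AT}}$ along a specific real period.

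Next we use the known fact that $\Phi_{\mathrm{AT}}$ is the single-valued (Deligne) associator. In other words, its coefficients are single-valued multiple zeta values, i.e.\ images under $\sv$ of motivic MZVs. The Lie-algebra element Rossi--Willwacher attach in degree $2k+1$ is therefore the dual of the coefficient of $\zeta^{\sv}(2k+1)=2\zeta(2k+1)$ in $\log\Phi_{\mathrm{sv}}$. The single-valued map is dual to a homomorphism of motivic Galois groups. Hence taking the coefficient of $\zeta^{\sv}(2k+1)$, which is a Lie coalgebra indecomposable, in a group-like motivic element yields an element of $\gm$: a generator $\sigma_{2k+1}$ modulo brackets, possibly rescaled by $2$.

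This is the obstacle. We must verify that the Rossi--Willwacher map from graph cocycles to $\grt_1$ is exactly this coefficient extraction and not a real-linear combination mixing in non-motivic periods. If the identification only holds up to products of lower zetas, we would argue using the structure of $\log$ of a group-like motivic series. The coefficient of a primitive single-valued period is always motivic, because $\sv$ factors through the de Rham periods and $\gm$ is the dual of the indecomposables.

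For (ii), we compute the depth-one part directly. It is governed by the wheel graph $W_{2k+1}$, whose coefficient $I^\can_{W_{2k+1}}$ is known from \cite{BrInvariant} to be nonzero. Equivalently, it is a nonzero multiple of $\zeta(2k+1)$, so its depth-one projection is nonzero.
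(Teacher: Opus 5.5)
Your step (i) carries all the motivic content, and it does not go through as written.

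The ``known fact'' it rests on is false. $\Phi_{\mathrm{AT}}$ is a Drinfeld associator with non-zero coupling constant, so its weight-two part is non-zero. By contrast, $\mathcal{Z}^{\sv}$ has vanishing weight-two part because $\zeta^{\sv}(2)=0$. In the Rossi--Willwacher interpolation the relevant endpoints are $\Phi_{\mathrm{KZ}}$ and $\Phi_{\overline{\mathrm{KZ}}}$, not $\Phi_{\mathrm{AT}}$.

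Moreover, $\phi_{2k+1}=\phi(\cochain^{\can}_{4k+1})$ is not ``the coefficient of $\zeta^{\sv}(2k+1)$ in $\log\mathcal{Z}^{\sv}$''. It is a specific real element whose components along brackets carry products and higher-depth periods, and it is tied to $\mathcal{Z}^{\sv}$ only through a path-ordered exponential. Extracting coefficients from a group-like motivic series does produce elements of $\gm$. But you give no identity expressing $\phi_{2k+1}$ through them, and knowing $\phi_{2k+1}$ modulo brackets does not show $\phi_{2k+1}\in\gmr$. You correctly flag this as ``the obstacle'' but leave it open. Your concern about non-motivic periods is beside the point: the target is $\gm\otimes\RR$, so arbitrary real coefficients are harmless. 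What must be motivic are the Lie directions.

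The missing input is the exact statement of \cite[Theorem~1.4]{RossiWillwacher:Etingof} for the full interpolation: $\psi\circ\mathcal{Z}=\mathcal{Z}^{\sigma}$ with $\psi=\PO\exp^{\circ}\big(\sum_{k}\int_0^1\alpha_{2k}\,\phi_{2k+1}\big)$ and $\alpha_{2k}=t^{2k}(1-t)^{2k}\td t$. Comparing with $\mathcal{Z}^{\sv}\circ\mathcal{Z}^{\sigma}=\mathcal{Z}$ and using that the Ihara action is simply transitive gives $\psi=(\mathcal{Z}^{\sv})^{\circ-1}$. Hence $\log^{\circ}\psi\in\widehat{\gmr}$, because $\mathcal{Z}^{\sv}\in\mathbb{A}(\RR)$ lies in the $U^{\dR}$-orbit. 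The Magnus expansion writes $\gr^W_{2k+1}\log^{\circ}\psi$ as $c_{2k}\phi_{2k+1}$ plus iterated Ihara brackets of the $\phi_{2j+1}$ with $j<k$, where $c_{2k}=\beta(2k+1,2k+1)\neq0$. Induction on weight then yields $\phi_{2k+1}\in\gmr$ exactly.

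The same identity gives your step (ii) for free. By corollary~\ref{cor: grlogZ}, $c_{2k}[\phi_{2k+1}]=-2\zeta(2k+1)[\sigma_{2k+1}]$ in $H_1(\gmr)$. So you need neither the non-vanishing of wheel integrals (for general $k$ this is \cite{BrownSchnetz:WheelCan}) nor a check that the wheel's Lie word has non-zero depth-one part.

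Three smaller corrections:
\begin{enumerate}
\item In (iii), the depth-graded motivic Lie algebra is \emph{not} generated in depth one; the Ihara--Takao relation in weight~$12$ already obstructs this. Argue instead that $[\gm,\gm]\subset D^2$ makes the depth-one projection factor through $H_1(\gm)$, on which it is injective.
\item $\omega_c^{4k+1}$ corresponds to $\sigma_{2k+1}$ (loop order $2k+1$, words of length $2k+1$), so there is no $\sigma_{4k+1}$ issue. Generators never need to ``arise from brackets''.
\item Willwacher's isomorphism $H^0(\cGC_2)\cong\grt_1$ is unnecessary. Injectivity of the composite out of the free Lie algebra already gives injectivity into $H^0(\cGC_2)\otimes\RR$.
\end{enumerate}
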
    
In this theorem we can replace $\RR$ with the ring of single-valued multiple zeta values.

Theorem~\ref{intro: thm3} circumvents unresolved conjectures on the Grothendieck–Teichm\"uller Lie algebra $\grt_1$ that underpinned partial results of \cite{BCGP} and relied on computer calculations in low degrees. A very recent result on $\grt_1$ by Willwacher \cite[Theorem~1.2]{Willwacher:freeGRT} can be combined with the non-vanishing of the canonical integrals of wheel graphs \cite{BrownSchnetz:WheelCan} to obtain an alternative proof (avoiding any motivic machinery) of the freeness of the canonical graph cocycles (the first part of theorem~\ref{intro: thm3})---and therefore also of theorems \ref{intro: thm1} and \ref{intro: thm2}. In our approach, the role of $\grt_1$ is replaced by $\gm$ and we derive from \cite[Theorem~1.4]{RossiWillwacher:Etingof} and \cite{Portner:RWeqCan} the stronger result (second part of theorem~\ref{intro: thm3}) that establishes the canonical cocycles directly inside the motivic Lie subalgebra $\gm\subseteq \grt_1$.

This insight gives a new perspective on the landmark result of \cite{CGP}, which identifies a copy of $\gm$ inside the cohomology of $\mathcal{M}_g$, the moduli space of stable curves of genus $g$. That result relied on Willwacher's identification $H^0(\cGC_2)\cong\grt_1$. Instead of this, our approach via theorem~\ref{intro: thm3} directly implies that $\gm\otimes\RR\subseteq H^0(\cGC_2)\otimes\RR$, without any reference to $\grt_1$. At present the precise relationship between the result of \cite{CGP} and theorem \ref{intro: thm2} is unclear. Another significant difference in the present setting is the existence of the Hopf algebra structures on the cohomology of $\GL_g(\ZZ)$ and $\mathcal{A}_g$ which generate large families of additional cohomology classes from the $\omega_c^{4k+1}$ (see, for example, Corollary~\ref{cor: tensoralg}).

We now describe the principal mathematical objects which are involved. 

\subsection{Overview}
Let $\mathcal{M}^{\trop}_g$ denote the moduli space of tropical curves, or equivalently, of weighted stable metric graphs of genus $g$. Let $\mathcal{A}^{\trop}_g$ denote the moduli space of tropical abelian varieties, which may be described as lattices equipped with a positive semi-definite quadratic form. More precisely, it is the quotient $\SPD_g^{rt}/\GL_g(\ZZ)$ of the rational closure of $\SPD_g$, equipped with its Satake topology. It is the Satake compactification of the locally symmetric space $\SPD_g/\GL_g(\ZZ)$. 

To a metric graph $G$ one may associate the graph Laplacian, which defines a quadratic form on $H_1(G;\ZZ)$. It induces the tropical Torelli map  
\[
    \lambda\colon\quad\mathcal M_g^{\trop}\longrightarrow \mathcal A_g^{\trop},
    \qquad \mathcal A_g^{\trop}
    \supset \SPD_g/\GL_g(\ZZ)
    \,.
\]
We begin with the space on the far right. Let $X_{ij}=X_{ji}\colon\SPD_g\longrightarrow \RR$ denote the coordinate functions (matrix entries) on $\SPD_g\subset\RR^{g(g+1)/2}$. The Borel classes in the stable cohomology of $\GL_g(\ZZ)$ are represented by the differential forms 
\begin{equation} \label{intro: canform}
    \omega^{4k+1} = \tr \left( (X^{-1} \td X)^{4k+1}\right)\,,
\end{equation}
on $\SPD_g/\GL_g(\ZZ)$. In this paper the same forms \eqref{intro: canform} play a different, but closely related role.
Indeed, we consider classes $[\omega^{4k+1}_c]$ defined in \cite{BrBord} which are compactly-supported analogues of the classes \eqref{intro: canform} in degree one higher. They are constructed as relative de Rham classes $(\widetilde{\omega}^{4k+1}, 0)$ on the Borel--Serre compactification of $\RR^{\times}_{>0} \backslash \SPD_g/\GL_g(\ZZ)$ relative to its boundary, where $g=2k+1$.
The main subtlety is to show that the forms $\omega^{4k+1}$, which blow up at infinity, extend smoothly to forms $ \widetilde{\omega}^{4k+1}$ on the Borel--Serre compactification, and hence to $\mathcal{A}_g^{\trop}$. The reason for the degree shift $[\omega^{4k+1}_c]\in H_c^{4k+2}$ in theorem~\ref{intro: thm1} is the quotient by $\RR^{\times}_{>0}$, which has non-trivial compactly-supported cohomology.
 
By pulling back the classes $[\omega_c^{4k+1}]$ along the tropical Torelli map, we obtain differential forms on $\mathcal{M}_g^{\trop}$, or its link  $L\mathcal{M}_g^{\trop}$. These spaces are  not manifolds. Nevertheless  such a form defines a map from cells on  $L\mathcal{M}_g^{\trop}$, which are indexed by combinatorial  stable weighted graphs $G$, to numbers. Concretely this map is given by integrating the forms \eqref{intro: canform} over the cell $\sigma_G$ of all metric graphs of a fixed combinatorial type $G$. 
These are called the \emph{canonical} graph integrals $I^{\can}_G$ and are discussed in \S\ref{sect: cochains}. The assignment $G\mapsto I_G^{\can}$ defines a closed real-valued cocycle on the graph complex $\hGC_2$. The latter is naturally identified with the relative cellular homology of $L\mathcal{M}_g^{\trop}$ relative to its boundary.  
Technically, this requires working with suitable bordifications and relative cohomology groups.

The next fundamental step is to identify these canonical graph integrals with the Rossi--Willwacher integrals $I^{\RW}_G$. This uses the recent result \cite{Portner:RWeqCan} that the integrals $I^{\can}_G$ and $I^{\RW}_G$ coincide. The latter are versions of Kontsevich's integrals in deformation quantisation. The former can be thought of as  Feynman integrals in a  parametric space  representation; the latter as a position space representation. The passage between the two crucially uses the Schwinger trick from quantum field theory. 

It follows that the canonical cocycle and the Rossi--Willwacher cocycle on the graph complex are one and the same. To relate graphs to the motivic Lie algebra, we consider a map \cite[\S7]{RossiWillwacher:Etingof} from graphs to Lie words
\begin{equation*}
    \phi\colon H^0(\cGC_2) \To 
    ( \widehat{\Lie}(\e_0,\e_1), \{ , \}) \cong  \mathrm{Lie}\, ( {}_0 \Pi_1  , \circ)
\end{equation*}
where ${}_0 \Pi_1 = \pi_1^{\dR}(\mathbb{P}^1 \setminus \{0,1,\infty\}, \tone_0, -\tone_1)$ is the de Rham fundamental group of $\mathbb{P}^1\setminus \{0,1,\infty\}$ equipped with the Ihara composition law $\circ$. 

The final ingredient is to exploit the relation between the image $\phi_{2k+1}=\phi(\cochain^\can_{4k+1}$) of the graph cocycle and single-valued periods of mixed Tate motives. Combining \cite[Theorem~1.4]{RossiWillwacher:Etingof} and \cite[\S5]{brownSVMZV}, we give a motivic interpretation of the Lie words $\phi_{2k+1}$ via the single-valued Drinfeld associator. 
Indeed, since the latter lies in the Zariski closure of  the unipotent radical $U_{\MT}$ of the motivic Galois group, we can unravel the $\phi_{2k+1}$ using a Magnus expansion inside $U_{\MT}$.  This enables us to establish that the graph cocycles defined by $I^{\can}_G= I^{\RW}_G$ take values in the motivic Lie subalgebra $\gm\subset \mathrm{Lie}\, ( {}_0 \Pi_1  , \circ)$.

\subsection{Summary} 
The preceding constructions define a map of Lie algebras:
\begin{equation*}
\begin{tikzcd}[column sep=6mm,row sep=3mm]
\Prim \Big( \bigoplus\limits_{g>0}  H_c^{2g}(\SPD_g/\GL_g(\ZZ); \RR) \Big) \arrow[r] &
H^0(\cGC_2) \otimes \RR \arrow[r] &
(\widehat{\Lie}(\e_0,\e_1) \otimes \RR, \{ , \} )  \arrow[r,phantom,"\supset"] &[-4mm]
\gm\otimes\RR
\\
\quad \big[\omega_c^{4k+1}\big] \quad
\arrow[u,phantom,sloped,"\in" {xshift=5pt}]
\arrow[r,mapsto]
&
\ \, \big[\cochain_{4k+1}^\can\big] \ \,
\arrow[u,phantom,sloped,"\in"]
\arrow[rr,mapsto]
& &
\ \phi_{2k+1} \ 
\arrow[u,phantom,sloped,"\in"]
\end{tikzcd}
\end{equation*}
Indeed, a result of \cite{BCGP} shows that the first map, induced by the tropical Torelli map $\lambda$, is a morphism of graded Lie algebras; whereas the second is induced by the map $\phi$. The primitives here are taken with respect to the graded Hopf algebra structure on the compactly-supported cohomology of $\GL_g(\ZZ)$ constructed in \cite{BCGP}.

Theorem \ref{intro: thm1} follows by showing that the diagonal arrow  in the commutative diagram
\begin{equation*}
\begin{tikzcd}[column sep=4em, row sep=3em]
\Lie(\omega^5_c,\omega^9_c,\ldots)\otimes_{\QQ}\RR
  \arrow[r]
   \arrow[dr, "\cong"] &
(\Lie(\e_0,\e_1)\otimes_{\QQ}\RR, \{\, , \,\})
\\
&
\gm\otimes_{\QQ} \RR \cong \Lie(\sigma_3,\sigma_5,\ldots)\otimes_{\QQ}\RR
  \arrow[u, hook]
\end{tikzcd}
\end{equation*}
is an isomorphism, where the horizontal arrow is $\eqref{intro:mapomegactoHg}$ composed with $\lambda^*$ and $\phi$.

The paper is structured as follows: In \S\ref{sect: cochains}, we review the graph complex, canonical and Rossi--Willwacher integrals $I^\can_G$ and $I^\RW_G$, and the map $\phi$. In \S\ref{sec:motivics} we gather the motivic ingredients, in particular Deligne's motivic fundamental group $\mathbb{P}^1 \setminus \{0,1,\infty\}$, the Galois group, Ihara's action, and motivic as well as single-valued associators. These are applied in \S\ref{sec:can-are-motivic} to derive from \cite{RossiWillwacher:Etingof} the key technical result (theorem~\ref{thm: maintechnical}) that, canonically, $\gm\otimes\RR\cong\Lie(\phi_3,\phi_5,\ldots)\otimes\RR$. In \S\ref{sec:geometry} we explain the geometric connections that lead from this result to theorems \ref{intro: thm1} and \ref{intro: thm2}.
In appendix~\ref{sec:explicit-cocycles} we present the explicit forms of the canonical graph cocycles $\cochain^\can_5$, $\cochain^\can_9$ (which were known before) and $\cochain^\can_{13}$ (which is new).

\subsection{Further directions and open questions}

\subsubsection{Additional classes, and non compactly-supported cohomology}
By exploiting the Hopf algebra structure and the Milnor--Moore theorem, one obtains many more classes from theorems~\ref{intro: thm1}, \ref{intro: thm2}. For example, we have the following corollary.
\begin{cor} There are canonical embeddings from the bigraded tensor algebra on classes $\omega_c^{4k+1}$, where the degree of $\omega_c^{4k+1}$ is $4k+2$ and its genus is $2k+1$:
\begin{equation*}
    T \left( \bigoplus_{k\geq 1} \QQ\, \omega^{4k+1}_c\right)
    \hooklongrightarrow
    \begin{cases}
        \bigoplus_{g\geq 0}  H_c^{2g}(\SPD_g/\GL_g(\ZZ); \RR)  \\ 
        \bigoplus_{g\geq 0} W_0 H_c^{2g}(\mathcal{A}_g; \RR)
    \end{cases}
\end{equation*}
where $\mathcal{A}_0$ and $\SPD_0$ are defined to be a point and $\GL_0(\ZZ)\defas\{1\}$. 
\end{cor}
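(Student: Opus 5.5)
The plan is to deduce the corollary from Theorems~\ref{intro: thm1} and \ref{intro: thm2} via the Milnor--Moore theorem, applied to the graded Hopf algebra structure on the targets constructed in \cite{BCGP} (resp.\ \cite{AMP} for $\mathcal{A}_g$). Write $H=\bigoplus_{g\geq 0} H_c^{2g}(\SPD_g/\GL_g(\ZZ);\RR)$, with $H$ in genus $0$ being $\RR$ (the point). We first check that $H$ is a connected, graded-commutative or cocommutative Hopf algebra of the right type. Connected means the genus-zero part is $\RR$. The grading is by genus, or equivalently by degree $2g$, which is even. We need it to be cocommutative, or at least to satisfy the hypotheses of Milnor--Moore in the form: a connected graded Hopf algebra over a field of characteristic zero that is cocommutative is the universal enveloping algebra of its primitives. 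The key point to verify is cocommutativity of the coproduct from \cite{BCGP}, which comes from the block-sum map $\SPD_a\times\SPD_b\to\SPD_{a+b}$. If \cite{BCGP} instead shows the product is commutative and the coproduct is cocommutative only up to the known structure, we will use whichever version of Milnor--Moore applies. Since all degrees $2g$ are even, no Koszul signs interfere.

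Given this, Milnor--Moore yields an isomorphism $U(\Prim H)\cong H$. Theorem~\ref{intro: thm1}, together with the remark after Theorem~\ref{intro: thm2}, says that the free Lie algebra $\Lie(\omega_c^5,\omega_c^9,\ldots)$ embeds into $\Prim H$ as a Lie subalgebra. By the Poincar\'e--Birkhoff--Witt theorem, an injection of Lie algebras $\mathfrak{h}\hookrightarrow\mathfrak{p}$ induces an injection $U(\mathfrak{h})\hookrightarrow U(\mathfrak{p})$ in characteristic zero. For a free Lie algebra we have $U(\Lie(V))\cong T(V)$. Composing these gives
\begin{equation*}
T\Big(\bigoplus_{k\geq 1}\QQ\,\omega_c^{4k+1}\Big)\otimes\RR \cong U\big(\Lie(\omega_c^{4k+1})\otimes\RR\big)\hooklongrightarrow U(\Prim H)\cong H .
\end{equation*}
Restricting to the rational tensor algebra gives the claimed map, which is injective because $T(V)\to T(V)\otimes_\QQ\RR$ is. The map respects the bigrading, since $\omega_c^{4k+1}$ has genus $2k+1$ and degree $4k+2=2g$, and the Hopf product is additive in genus.

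The symplectic case is identical, with $H'=\bigoplus_g W_0H_c^{2g}(\mathcal{A}_g;\RR)$ and Theorem~\ref{intro: thm2} in place of Theorem~\ref{intro: thm1}. Here we additionally need that $W_0$ is preserved by the product and coproduct. This should hold because these maps are induced by algebraic maps such as $\mathcal{A}_a\times\mathcal{A}_b\to\mathcal{A}_{a+b}$, which are morphisms of mixed Hodge structures, so $H'$ is a Hopf subalgebra.

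The main obstacle is confirming the hypotheses of Milnor--Moore, namely connectedness and cocommutativity, for the Hopf structures of \cite{BCGP,AMP}. If cocommutativity fails, the fallback is to argue directly. The subalgebra generated by a free Lie subalgebra of primitives in a connected graded Hopf algebra is a quotient of $U$ of that Lie algebra. Injectivity of $U(\mathfrak h)\to H$ for a Lie algebra $\mathfrak h$ of primitives then follows from the standard argument comparing coproducts of PBW monomials, which works in any connected graded bialgebra in characteristic zero.
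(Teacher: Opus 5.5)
Your proposal is correct and follows the paper's route: the paper proves this corollary in one line from the Milnor--Moore theorem for the connected, cocommutative graded Hopf algebras of \cite{BCGP} (citing \cite[Corollary 7.1, Remark 7.3]{BCGP}), and your chain $T(V)\otimes_{\QQ}\RR\cong U(\Lie(V)\otimes_{\QQ}\RR)\hookrightarrow U(\Prim H)\cong H$ via PBW, with $V=\bigoplus_{k\geq 1}\QQ\,\omega_c^{4k+1}$, is exactly what that deduction amounts to. Two small precisions. First, the injectivity you actually need is that of the $\RR$-linear map $\Lie(\omega_c^5,\omega_c^9,\ldots)\otimes_{\QQ}\RR\to\Prim H$; this comes from the real isomorphism with $\gmr$ in Theorems~\ref{intro: thm3} and~\ref{thm: Agfree}, not from a $\QQ$-linear embedding alone. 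Second, your mixed Hodge heuristic for the $W_0$-part accounts at most for the coproduct, so you should simply quote the Hopf structure on $\bigoplus_g W_0H_c^{\bullet+g}(\mathcal{A}_g)$ from \cite{BCGP}.
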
 
By Poincar\'e duality, a copy of these cohomology classes, and in particular the motivic Lie algebra, also appears in the homology of the special linear groups $\SL_g(\ZZ)$. This mechanism is well-documented, for example in \cite{BCGP}.
These classes are expected to be of Eisenstein type, and therefore should be contrasted with the cuspidal classes recently discovered in \cite{BoxerCalegariGee:CuspGLn}.

\subsubsection{Stable versus unstable classes}
In order to close the gap between algebraic $K$-theory and the compactly-supported cohomology classes $[\omega_c^{4k+1}]$, one may invoke the Quillen spectral sequence. Its $E_1$ page is the Hopf algebra $\mathcal{H}_c$ (of theorem~\ref{thm:GL-Hopf}) of \emph{unstable} cohomology groups of $\GL_g(\ZZ)$ with compact support, and it abuts to the \emph{stable} cohomology of $\GL_g(\ZZ)$. See \cite{BCGP} for a detailed discussion. 
One expects that the classes $[\omega^{4k+1}_c]$ on the $E_1$ page are permanent, and that the corresponding period integrals, which are given by the canonical integrals over the wheel classes \cite{BrownSchnetz:WheelCan}, compute the Borel regulator, which is proportional to odd zeta values by Borel. See \cite[\S2.4]{BrownSchnetz:WheelCan} for  an alternative mechanism for relating the wheel integrals to the Borel regulator in low degrees.

\subsubsection{Double copy and doubling of the weight}
In the theory of mixed Tate motives, the generators $\sigma_{2k+1}$ of $\gm$ in degree $2k+1$ arise from the algebraic $K$-theory in roughly double the degree:
the image $[\sigma_{2k+1}]$ in the abelianisation of $\gm$ generates the extension group
\begin{equation*} 
    \Ext_{\MT(\ZZ)}^1(\QQ(0), \QQ(2k+1)) 
    \cong   K_{4k+1}(\ZZ)\otimes_{\ZZ} \QQ 
     \cong \Prim \, \Big( \varprojlim_g H^{4k+1}(\GL_g(\ZZ);\QQ)\Big)
\end{equation*}
whereas the stable cohomology on the right is generated (over $\RR$) by the Borel class $[\omega^{4k+1}]$. On the left, the odd zeta value $\zeta(2k+1)$ dual to $\sigma_{2k+1}$ can be realized as an iterated integral of length $2k+1$ in the motivic fundamental group of $\mathbb{P}^1\setminus \{0,1,\infty\}$. On the right, the same zeta value arises as an integral of $\omega^{4k+1}$,  for some large $g\gg k$ see \cite{Borel}, and, granting the discussion in the previous  paragraph, should coincide with the ordinary integral of  $\omega^{4k+1}$ over the  wheel  homology classes \cite{BrownSchnetz:WheelCan} in  the unstable range.

This puzzling halving of the weight can be explained as follows.
On the right, $I^\can_G$ is the integral of the holomorphic form $\omega^{4k+1}$ in $4k+2$ projective edge length variables $x_e\in\RR$ (coordinates on $\mathcal{M}_g^\trop$) and vanishes unless $G$ has $2k+2$ vertices and $g=2k+1$. In contrast, Rossi--Willwacher (definition~\ref{def:RW}) integrate a degree $4k$ form in holomorphic and antiholomorphic variables $z_v,\bar{z}_v$ over the configuration space $\mathcal{M}_{0,2k+3}(\CC)$ of complex vertex positions $z_v\in\CC$. The identity $I^\can_G=I^\RW_G$, proved in \cite{Portner:RWeqCan} via a correspondence, thus transports canonical integrals from $\mathcal{M}_{2k+1}^\trop$ to the moduli space $\mathcal{M}_{0,2k+3}(\CC)$.\footnote{This correspondence between integrals of $\omega^{4k+1}$ on $\SPD_g/\GL_g(\ZZ)$ and real-analytic forms on complements of hyperplane arrangements generalizes from graphs to arbitrary Voronoi cells \cite[\S2]{Portner:RWeqCan}.}

Now in $I^\RW_G$, each complex integral over $\td z_v\wedge \td\overline{z}_v$ produces a factor $2\ipi$ by the residue formula  and reduces to a single real integration \cite{schnetzgraphical,BanksPanzerPym:MZVinDQ}. This explains the halving of the weight. It would also follow from expanding $I^\RW_G$ into so-called \emph{single-valued integrals}
\begin{equation*}
    \frac{1}{(2\ipi)^{2k+1}}
    \int_{\mathcal{M}_{0,2k+4}(\CC)} \eta \wedge \overline{\nu}
    =\sv\left(\int_{\gamma} \nu \right)
\end{equation*}
of holomorphic and antiholomorphic logarithmic forms $\eta$ and $\overline{\nu}$. The `double copy formula' from \cite{BrownDupont:SVdoublecopy} expresses such integrals as polynomials of weight $2k+1$ in ordinary (holomorphic), relative ($\gamma$ has boundary) periods of $\mathcal{M}_{0,2k+4}$. The latter reduce to iterated integrals on $\mathcal{M}_{0,4}=\mathbb{P}^1\setminus \{0,1,\infty\}$ (see \cite{Brown:MZVPeriodsModuliSpaces}) and thus bring us to the standard realization of mixed Tate motives over $\ZZ$. 
The integrand of $I^\RW_G$, however,  does not immediately  fit into the single-valued framework, so realizing this approach requires further (ongoing) work.

To conclude, the Schwinger trick correspondence \cite{Portner:RWeqCan} between canonical integrals on $\mathcal{M}_g^\trop$ and (single-valued) integrals $I^\RW_G$ on the moduli space $\mathcal{M}_{0,n}(\CC)$ of genus 0 curves with $n$-marked points provides the missing link to connect the Borel forms $\omega^{4k+1}$ on $\SPD_g$ with the motivic fundamental group of $\mathbb{P}^1\setminus \{0,1,\infty\}$.

\subsection{Acknowledgments}
This project has received funding from the European Research Council (ERC) under the European Union’s Horizon Europe programme (grant agreement No.\ 101167287).
Erik Panzer is funded as a Royal Society University Research Fellow through grant {URF{\textbackslash}R{\textbackslash}251041}.

For the purpose of Open Access, the authors have applied a CC BY public copyright licence to any Author Accepted Manuscript (AAM) version arising from this submission.


\section{Cocycles on the graph complex} \label{sect: cochains}

\subsection{The even commutative graph complex}
Let $G$ be a finite connected graph with edge set $E_G$ of size $e_G = |E_G|$ and vertex set $V_G$. An \emph{orientation} on $G$ is an element 
\[
    o \in \big(\textstyle{\bigwedge}^{e_G}\, \ZZ^{E_G} \big)^{\times}
    \cong \ZZ^{\times}=\{-1,+1\}
    \ .
\]
The graph homology complex $\hGC_2$ is the $\QQ$-vector space generated by classes $[G,o]$ of connected oriented graphs that are simple (no multiedges or self-loops) and have degree $\geq 3$ at every vertex, subject to the relations $[G,-o]=-[G,o]$ and $[G,o] = [G', \sigma(o)]$ for any isomorphism $\sigma\colon G \overset{\sim}{\rightarrow} G'$.
The differential satisfies $\partial^2=0$ and is defined by 
\[ \partial [G,o] = \sum_e  [G/e, o/e] \]
where $G/e$ denotes the graph obtained from $G$ by contracting the edge $e$ and $o/e$ is the induced orientation such that $e\wedge o/e = o$. Whenever $G/e$ is not simple (e.g.\ $e$ lies in a triangle of $G$), the summand is set to $[G/e,o/e]=0$.
\begin{figure}
    \centering
    $
        G/b\cong\Graph{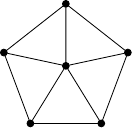}
        \quad\reflectbox{$\mapsto$}\quad
        G=\Graph{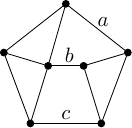} 
        \quad\mapsto\quad 
        G/a\cong G/c\cong \Graph{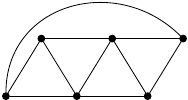}
    $
    \caption{The graph complex differential (edge contractions).}%
    \label{fig:boundary}%
\end{figure}
\begin{ex}\label{ex:boundary}
    The graph $G$ in figure~\ref{fig:boundary} admits three non-vanishing edge contractions. One produces a wheel graph $W_5$ and two generate a zigzag graph $Z_5$. With their orientations specified in table~\ref{tab:can357}, and a suitable orientation of $G$, this gives $\partial G=W_5+2Z_5$.    
\end{ex}
The differential preserves the grading $\hGC_2=\bigoplus_h \gr_h \hGC_2$ by the first Betti number $h_G = e_G -|V_G|+1$ and has degree $-1$ with respect to the homological degree, which is defined as $e_G-2h_G$. The graph homology
\[  
    H(\hGC_2) 
    = \frac{\ker \partial}{\mathrm{Im} \, \partial}
    =\bigoplus_{n \geq 0 } H_n(\hGC_2)
    =\bigoplus_{n,h \geq 0 } \gr_h H_n(\hGC_2)
\]
is concentrated in non-negative degrees by \cite[Theorem~1.1]{WillwacherGRT} or \cite[Theorem~1.4]{CGP}. The spaces $\gr_h\hGC_2$ and $\gr_h H_n(\hGC_2)=H_n(\gr_h \hGC_2)$ have finite dimension for all $n,h$.

The graph cohomology complex is the product $\cGC_2=\prod_h \gr_h \hGC_2$ endowed with a codifferential $\delta\colon \cGC_2\To\cGC_2$ defined by splitting vertices, see \cite[\S3]{WillwacherGRT}. This complex is the dual $\cGC_2\cong \Hom(\hGC, \QQ)$ of $\hGC_2$ via the pairing $\langle\cdot,\cdot\rangle\colon \cGC_2\otimes \hGC_2\longrightarrow \QQ$ given by
\begin{equation*}
    \big\langle[G,o],[G',o']\big\rangle
    =
    \begin{cases}
        \phantom{-}|\Aut G| & \text{if $[G,o]=[G',o']\neq 0$,} \\
                  -|\Aut G| & \text{if $[G,o]=-[G',o']\neq 0$,} \\
               \phantom{-}0 & \text{if $G$ is not isomorphic to $G'$.} \\
    \end{cases}
\end{equation*}
With this identification, the codifferential is the transpose $\langle \delta G,G'\rangle = \langle G,\partial G'\rangle$ of the differential, see \cite[\S4.3]{BrownHuPanzer:Pfaffian}. A cocycle, i.e.\ a linear map $f\colon \hGC_2\longrightarrow \QQ$ with $f\circ \partial =0$, thus corresponds to an element $x\in\cGC_2$ with $\delta x=0$, via the relation $f(G)=\langle x,G\rangle$.

Furthermore, the complex $\cGC_2$ has a Lie algebra structure defined by inserting graphs into each other, see\ \cite[Example~4.8]{BrownHuPanzer:Pfaffian}. Under the identification $\cGC_2\cong\Hom(\hGC_2,\QQ)$ from above, this Lie bracket is given for two linear maps $f_1,f_2\colon \hGC_2 \longrightarrow \QQ$ by a sum
\[
    [f_1,f_2] (G)  
    =  \sum_{\gamma}  \big(f_1(\gamma) f_2(G/\gamma) - f_2(\gamma) f_1(G/\gamma) \big)
\]
over all connected subgraphs $\gamma \subset G$ such that both $\gamma$ and the quotient graph $G/\gamma$ are simple and have degrees $\geq 3$ at each vertex; see \cite[Theorem~4.14]{BrownHuPanzer:Pfaffian}.

\subsection{The canonical cocycle}
Choose a direction for each edge, to identify $\ZZ^{E_G}$ with the 1-chains of a graph viewed as a simplicial complex. Consider the exact sequence
\[ 0 \To H_1(G;\ZZ) \To \ZZ^{E_G} \To \ZZ^{V_G} \ .\]
Let $\QQ[x_e, e\in E_G]$ denote the polynomial ring with one generator $x_e$ for each edge $e$ of $G$. Consider the quadratic form on $\ZZ^{E_G}$ defined by $\langle e_i, e_j\rangle = \delta_{ij} x_i$, where $\delta$ denotes the Kronecker delta.  It restricts to a quadratic form on $H_1(G;\ZZ)$, called the graph Laplacian. In any choice of basis of $H_1(G;\ZZ)$, it is represented by an $h_G\times h_G$ square matrix with entries in $\QQ[x_e]$ known as a graph Laplacian matrix $\Lambda_G$. 

For a connected graph $G$ as above, consider the coordinate simplex in projective space
\begin{equation}\label{eq:graph-simplex}
    \sigma_G = \{[x]\colon x_e \geq 0 \ \text{for all} \ e\in E_G \} \subset \mathbb{P}^{e_G-1} (\RR)\ .
\end{equation}
From an orientation on $G$ we induce an orientation on $\sigma_G$ as follows: label the $n=e_G$ edges such that $o=e_1\wedge\ldots\wedge e_n$, then declare $\partial_{e_2}\wedge\ldots\wedge\partial_{e_n}$ positively oriented in the affine chart $x_{e_1}=1$. The following integrals were defined in \cite{BrInvariant}, denoted there $I_G(\omega^{4k+1})$.
\begin{defn}\label{defn:Ican}
For any oriented graph with $e_G=4k+2$ edges (with $k\geq 1$ any integer), set
\begin{equation*}
    I^\can_G \defas \int_{\sigma_{G}} \tr \big( (\Lambda^{-1}_G \td\Lambda_G )^{4k+1} \big)\ . 
\end{equation*}
\end{defn}
Since the determinant of the Laplacian $\Lambda_G$ is the Kirchhoff graph polynomial, one may interpret the integrals $I^\can_G$ as instances of Feynman periods \cite{BlochEsnaultKreimer:MotivesGraphPolynomials}. 

We briefly summarise some facts established in \cite{BrInvariant}:

\begin{enumerate}[I.]
    \item The integrand does not depend on the choice of edge directions or basis of $H_1(G;\ZZ)$. It defines a differential form of degree $e_G-1$ in the edge variables $x_e$, which descends to a meromorphic differential form on the projective space $\mathbb{P}^{e_G-1}$.

    \item \label{II} The integrand has poles along the boundary of the domain of integration $\sigma_G$, but still the integral is finite. It only depends on the class $[G,o]$ in the graph complex $\hGC_2$.

    \item \label{III} The integral $I^\can_G$ vanishes unless $e_G=2 h_G$ (\cite[Lemma~8.4]{BrInvariant}). This is equivalent to $h_G=2k+1$, and so $[G,o]$ has homological degree zero in $\hGC_2$.

    \item \label{IV} The integrals $I^\can_G$ of different graphs are related: for any graph $G'$ with $4k+3$ edges,
    \[ 0 = \sum_{e \in G'} I^\can_{G'/e} \ . \]
    This is a special case of an application of Stokes' formula, because the additional terms in \cite[Theorem~8.5]{BrInvariant} vanish in this case (firstly because $\omega^{4k+1}$ is primitive in the terminology of \emph{loc.\ cit.}, and secondly by item \ref{III}).
\end{enumerate}
By item \ref{II}, the integral extends to a linear map $I^\can\colon \hGC_2\longrightarrow\RR$. By \ref{IV}, this satisfies $I^\can(\partial G')=0$, so this is a cocycle of the complex $(\hGC_2,\partial)$. In graph cohomology, this function $I^\can=\langle\cochain^\can, \cdot\rangle$ corresponds to a graph cocycle which we denote $\cochain^{\can}\in\cGC_2\widehat{\otimes}\RR$. Each component $\cochain^\can_{4k+1}\in\cGC_2\otimes\RR$ of $\cochain^\can=\prod_k\cochain^\can_{4k+1}$ is a sum over the finitely many isomorphism classes of graphs with $4k+2$ edges,
\begin{equation}\label{eq:graph-cocycle-def}
\cochain^{\can}_{4k+1} = \sum_{G} \frac{1}{\Aut(G)} I^\can_{[G,o]} \cdot [G,o].
\end{equation}
By \ref{III}, only graphs with $h_G=2k+1$ contribute, so $\cochain^\can_{4k+1}\in\gr_{2k+1} \cGC_2\otimes\RR$. The cohomology classes represented by these cocycles are thus of degree zero: $[\cochain^{\can}_{4k+1}] \in H^0(\cGC_2) \otimes \RR$.
\begin{cor} For all  $k\geq 1$, the cochain  $\cochain^{\can}_{4k+1}$ is a cocycle: $\delta  \cochain^{\can}_{4k+1}=0$.
\end{cor}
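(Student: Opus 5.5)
The corollary is essentially the duality between the homological statement in item \ref{IV} and the cohomological statement $\delta\cochain^\can_{4k+1}=0$. The plan is to transport the vanishing $I^\can(\partial G')=0$ through the pairing $\langle\cdot,\cdot\rangle$, using the relation $\langle\delta x,G'\rangle=\langle x,\partial G'\rangle$.

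\textbf{Step 1: the cochain represents $I^\can$.} By item \ref{II}, $G\mapsto I^\can_{[G,o]}$ is a well-defined linear functional on $\gr_{2k+1}\hGC_2\otimes\RR$ in the relevant edge degree. It is compatible with $[G,-o]=-[G,o]$ because reversing the orientation reverses that of $\sigma_G$. It is also compatible with isomorphisms, since relabelling edges permutes coordinates and preserves the integrand. I then check that the element $\cochain^\can_{4k+1}$ in \eqref{eq:graph-cocycle-def} satisfies $\langle\cochain^\can_{4k+1},G\rangle=I^\can_G$ for every basis graph $G$. The pairing of $[G',o']$ with $G$ vanishes unless $G'\cong G$, and in that case it equals $\pm|\Aut G|$. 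This factor cancels the $1/|\Aut G|$ in \eqref{eq:graph-cocycle-def}, and the sign is absorbed by the orientation convention $I^\can_{[G,-o]}=-I^\can_{[G,o]}$.

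Graphs with an odd automorphism satisfy $[G,o]=0$. For these, $I^\can_{[G,o]}=0$ by item \ref{II}, so they cause no trouble.

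\textbf{Step 2: vanish on boundaries.} For any $x\in\cGC_2$ and basis element $G'$ we have $\langle\delta x,G'\rangle=\langle x,\partial G'\rangle$. Applying this with $x=\cochain^\can_{4k+1}$ gives
\[
\langle\delta\cochain^\can_{4k+1},G'\rangle=\sum_{e}I^\can_{G'/e}.
\]
Only graphs $G'$ with $4k+3$ edges can contribute, since $\partial$ lowers the edge count by one and $\cochain^\can_{4k+1}$ is supported on graphs with $4k+2$ edges. For such $G'$, item \ref{IV} says this sum vanishes.

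One subtlety is that in $\partial G'$ the non-simple contractions are set to zero, whereas item \ref{IV} sums over all edges. For $G'/e$ with a multiedge or self-loop, I would argue that the corresponding integral vanishes, or is excluded, via the same Stokes argument of \cite[Theorem~8.5]{BrInvariant}. The key point is that the face of $\sigma_{G'}$ at $x_e=0$ contributes $I^\can_{G'/e}$ only when $G'/e$ keeps its Betti number. When $G'/e$ has a doubled edge, the integrand is invariant under the symmetry swapping the two parallel edges, and that symmetry reverses the orientation, so the integral vanishes. The same holds when $G'/e$ has a bivalent vertex, which is impossible here anyway.

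\textbf{Step 3: conclude.} Since $\delta\cochain^\can_{4k+1}$ pairs to zero with every basis graph, and the pairing is nondegenerate because $|\Aut G|\neq0$, it follows that $\delta\cochain^\can_{4k+1}=0$.

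The main obstacle is the bookkeeping in Step 2: matching the sum in item \ref{IV} exactly with $\partial G'$ in the complex, including the orientation signs $o/e$ and the dropped non-simple terms. I would handle this by invoking the statement of item \ref{IV} as already producing $I^\can(\partial G')=0$, as the text preceding the corollary asserts.
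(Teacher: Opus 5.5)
Your proposal is correct and follows the paper's argument. Item~II makes $I^\can$ a linear functional on $\hGC_2$, represented by $\cochain^\can_{4k+1}$ via the pairing; item~IV gives $I^\can(\partial G')=0$; and the transpose relation $\langle\delta x,G'\rangle=\langle x,\partial G'\rangle$ turns this into $\delta\cochain^\can_{4k+1}=0$. Your extra check that non-simple contractions contribute zero is valid and is left implicit in the paper: for simple $G'$, such a $G'/e$ can only acquire doubled edges, and swapping two parallel edges is an orientation-reversing symmetry that preserves the integrand, so $I^\can_{G'/e}=0$.
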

\begin{ex}
    The canonical integrals of the wheel and zigzag graphs from figure~\ref{fig:boundary} were computed in \cite[\S10.2]{BrInvariant}. In our conventions and with the orientations specified in table~\ref{tab:can357}, these are $I^\can_{W_5}=1260\zeta(5)$ and $I^\can_{Z_5}=-630\zeta(5)$. So indeed, they satisfy the cocycle constraint $I^\can(W_5+2Z_5)=0$ from example~\ref{ex:boundary}. The corresponding graph cocycle is
    \begin{equation*}
        \cochain^\can_9 = 126\zeta(5)\cdot W_5-315\zeta(5)\cdot Z_5
        =126\zeta(5)\left( \Graph[0.4]{W5}-\tfrac{5}{2} \times\Graph[0.5]{Z5} \right).
    \end{equation*}
\end{ex}

\subsection{The Rossi--Willwacher cocycle}
Let $k \geq 1 $ and let $[G,o]$ be an oriented graph with $4k+2$ edges and $2k+2$ vertices as above.
Denote coordinates on $\CC^{V_G}$ by $z_v\in \CC$, for every vertex $v\in V_G$.
The configuration space of vertex positions is
\[  C(V_G) =  (\CC^{V_G} \setminus \Delta) /  ( \CC \rtimes \RR_{>0}^{\times})  \]
where $\Delta = \{z \in \CC^{V_G} \mid z_u = z_v \ \text{for some}\ u \neq v\}$ denotes the large diagonal, and where $\CC \rtimes \RR_{>0}^{\times}$ acts diagonally on $\CC^{V_G}\setminus \Delta$ by translation and real positive scaling.

Choose any ordering of the edge set that is compatible with the orientation, i.e.\ so that $o=e_1\wedge\ldots\wedge e_{4k+2}$.
Direct each edge and set $z_i \defas z_{s(i)}-z_{t(i)}$ where $s(i),t(i) \in V_G$ are the source and target of $e_i$.
The following integrals were defined in \cite[eqs.\ (9), (16)]{RossiWillwacher:Etingof}.\footnote{%
To simplify the comparison with $I^\can_G$, our normalization of the integrals $I^{\RW}_G$ relates to the original definition of $c^{1/2}_G$ in \cite[eq.~(16)]{RossiWillwacher:Etingof} by $I^\RW_G=(2 \ipi)^{2k+1}\, 2^{4k}c_G^{1/2}$.}
\begin{defn} \label{def:RW}
For an oriented graph $[G,o]$ as above, set
\begin{equation*}
    I^{\RW}_G \defas
    \frac{(-1)^k 2^{2k-1} }{\pi^{2k+1}}
    \int_{C(V_G)} \sum^{4k+2}_{i=1} (-1)^{i-1} \log |z_i|^2 \, \td  \arg(z_1) \wedge \ldots \wedge  \widehat{\td \arg(z_{i})}\wedge \ldots \wedge \td\arg(z_{4k+2})
    .
\end{equation*}
\end{defn}
Note that $C(V_G)\cong S^1\times\mathcal{M}_{0,2k+3}(\CC)$, so after a trivial integral over global rotations, $I^{\RW}_G$ amounts to the integral of a real-analytic form (a logarithm and angles) over $\mathcal{M}_{0,2k+3}(\CC)$.

We briefly summarise some facts established in \cite{RossiWillwacher:Etingof}:
\begin{enumerate}[I.]
    \item The integrand is singular on the diagonal $\Delta$, but still the integral is finite. It only depends on the class $[G,o]$ in the graph complex $\hGC_2$.
    \item For any graph $G'$ with $4k+3$ edges and $2k+3$ vertices, the relation $0=\sum_{e\in G'} I^\RW_{G'/e}$ holds, see \cite[Proposition~6.1]{RossiWillwacher:Etingof}. This follows from a regularized version of Stokes' theorem for forms with logarithmic singularities \cite{AlekseevRossiTorossianWillwacher:Logs}.
\end{enumerate}
Therefore, the Rossi-Willwacher integral also extends to a cocycle $I^\RW\colon \hGC_2\longrightarrow \RR$. The corresponding graph cohomology cocycle satisfies $\delta\cochain^\RW=0$ and its graded pieces $\cochain^\RW_{4k+1}\in\cGC_2\otimes\RR$ consist of graphs with $4k+2$ edges and $2k+2$ vertices.

\subsection{Equality of the cocycles}
In \cite{Portner:RWeqCan} the third-named author proved
\begin{thm}\label{thm:RWeqCan}
The canonical and Rossi--Willwacher integrals are equal, $I^\can_G=I^\RW_G$, for all graphs. Therefore, the corresponding graph cocycles are identical:
\[  \cochain^{\can} = \cochain^{\RW} \ . \]
\end{thm}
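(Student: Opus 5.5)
The plan is to prove $I^\can_G=I^\RW_G$ graph by graph, via the Schwinger trick, by writing both integrals as integrals of one form over a common space and comparing. The second assertion, $\cochain^\can=\cochain^\RW$, is then immediate from \eqref{eq:graph-cocycle-def} and the analogous expansion of $\cochain^\RW$, because both cocycles are defined through the pairing with the same coefficients $1/|\Aut G|$. The two sides are supported on the same graphs: by item~\ref{III}, $I^\can_G$ vanishes unless $e_G=4k+2$ and $|V_G|=2k+2$, which are exactly the graphs on which $I^\RW_G$ is defined.

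The first step is to rewrite the position-space integrand of Definition~\ref{def:RW} as a form on $\CC^{V_G}$ that is invariant under translations and under $\CC^\times$. To see this, note that $\sum_i(-1)^{i-1}\log|z_i|^2\,\td\arg z_1\wedge\cdots\widehat{\td\arg z_i}\cdots$ comes, up to exact terms, from the "$\log$ part" of $\bigwedge_i \td\log z_i$ split into real and imaginary parts. Next, for each edge introduce a Schwinger parameter through Gaussian representations such as
\[
\log|z_i|^2 \text{ and } \td\arg z_i \longleftrightarrow \int_0^\infty \exp(-x_i|z_i|^2)\,(\cdots)\,\frac{\td x_i}{x_i}.
\]
More invariantly, I would take the closed form $\sum_e x_e\,\td z_e\wedge\td\bar z_e$-type Gaussian $\exp\big(-\sum_e x_e|z_e|^2 + \text{(odd Grassmann terms)}\big)$ on $\RR_{>0}^{E_G}\times\CC^{V_G}$. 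This is a supersymmetric/Mathai--Quillen type representative whose fibre integral over $x$ reproduces the RW integrand.

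Then I would integrate in the opposite order. The integral over the vertex positions $z\in\CC^{V_G}/\CC$ is Gaussian, with quadratic form given by the vertex-space dual of the edge form $\langle e_i,e_j\rangle=\delta_{ij}x_i$. By the exact sequence $0\to H_1(G)\to\ZZ^{E_G}\to\ZZ^{V_G}$, the odd (fermionic) part of this Gaussian integral yields a Pfaffian or trace expression in the graph Laplacian $\Lambda_G$ on $H_1(G)$. Identifying the degree-$(4k+1)$ component with $\tr\big((\Lambda_G^{-1}\td\Lambda_G)^{4k+1}\big)$ on $\sigma_G$, up to the constant $(-1)^k2^{2k-1}/\pi^{2k+1}$ and powers of $2\ipi$ from the Gaussians, gives $I^\can_G$. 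Some points need care here. One has to check that the generating form $\tr(\exp)$ of \cite{BrInvariant}, equivalently the Borel transgression form, is exactly what the fermionic Gaussian over the cokernel produces; this is a finite-dimensional linear-algebra identity, stated for arbitrary positive-definite quadratic forms restricted to a sublattice, which I would prove first in that generality. The quotient by $\RR_{>0}^\times$ on the $z$ side must be matched with projectivization in $x$. Finally, the count $e_G=2h_G$ should fix the degrees consistently.

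The main obstacle is justifying Fubini and Stokes across the singularities. The RW integrand has logarithmic singularities on the diagonals $\Delta$, and the canonical form has poles on $\partial\sigma_G$, which corresponds to $x_e\to0,\infty$. Interchanging the order of integration requires an absolute convergence argument or a compactification in which both singular loci become normal crossings. I would first try working on the product of the Fulton--MacPherson compactification of $C(V_G)$ with a blow-up of $\sigma_G$ along the subgraph loci used in \cite{BrInvariant}, and then show that the boundary contributions vanish by the same degree counting as in item~\ref{III}.
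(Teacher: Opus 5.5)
Your overall route is the one the paper attributes to \cite{Portner:RWeqCan}: a Schwinger-parameter correspondence between the parametric integral over $\sigma_G$ and the position-space integral over $C(V_G)$, proved first for arbitrary positive definite forms restricted to a sublattice. The paper gives no argument of its own; it only describes that proof as a Schwinger-trick correspondence that works for arbitrary Voronoi cells. Deducing $\cochain^{\can}=\cochain^{\RW}$ from the equality of integrals is fine.

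But as a proof the proposal has a genuine gap: the step you postpone (``one has to check that \ldots'') is the entire theorem. You never write down the form on $\RR_{>0}^{E_G}\times\CC^{V_G}$; ``odd Grassmann terms'' is a placeholder. Both fibre-integral claims are asserted rather than computed: that integrating out $x$ gives $\sum_i(-1)^{i-1}\log|z_i|^2\bigwedge_{j\neq i}\td\arg z_j$, and that integrating out $z$ gives $\tr\big((\Lambda_G^{-1}\td\Lambda_G)^{4k+1}\big)$ with exactly the constant $(-1)^k2^{2k-1}/\pi^{2k+1}$. Your opening remark about $\bigwedge_i\td\log z_i$ does not supply this either, since that holomorphic $(4k+2)$-form vanishes identically on the $(2k+1)$-dimensional space $\CC^{V_G}/\CC$.

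Filling this in requires at least the following.

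\textbf{The logarithm.} $\int_0^\infty e^{-x|z|^2}\,\td x/x$ diverges at $x\to0$, so $\log|z_i|^2$ has no Gaussian representation of the kind you display. You need the Frullani form $\log|z|^2=\int_0^\infty(e^{-x}-e^{-x|z|^2})\,\td x/x$ with a cutoff. You then need an argument that the divergent, $z$-independent subtraction drops out. It does, because $\sum_i(-1)^{i-1}\bigwedge_{j\neq i}\td\arg z_j=0$ on $\CC^{V_G}/\CC$: it is the contraction with the rotation field of the pull-back of $\td\theta_1\wedge\cdots\wedge\td\theta_{4k+2}$ along the angle map, and that map factors through the $(4k+1)$-dimensional quotient by scaling.

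\textbf{The angle factors.} An explicit choice is needed. For example, $-\td\big(e^{-x_e|z_e|^2}\big)\wedge\td\arg z_e$ is smooth and closed, and its integral over $x_e\in(0,\infty)$ is $\td\arg z_e$.

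\textbf{Which Laplacian appears.} The Gaussian in the vertex positions with the weights $x_e$ you wrote produces the vertex Laplacian $L_G(x)$ on $\CC^{V_G}/\CC$, not $\Lambda_G(x)$. Its determinant is $\sum_T\prod_{e\in T}x_e=\prod_e x_e\cdot\det\Lambda_G(x^{-1})$, with $T$ running over spanning trees. The exact sequence alone does not turn one into the other. You need:
\begin{enumerate}
\item[(a)] the duality that $\Lambda_G(x)$ and $L_G(x^{-1})$ are the restrictions of $\mathrm{diag}(x)$ to $H_1(G)$ and of $\mathrm{diag}(x)^{-1}$ to its annihilator, which is a Schur-complement relation;
\item[(b)] an identity relating the canonical form of one to that of the other, using $\omega^{4k+1}(X^{-1})=-\omega^{4k+1}(X)$ and the fact that $x\mapsto x^{-1}$ reverses the orientation of $\sigma_G$;
\item[(c)] if that identity holds only modulo exact forms, control of the resulting boundary terms on $\sigma_G$.
\end{enumerate}

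Until these computations are carried out, what you have is an outline of the argument in \cite{Portner:RWeqCan}, not a proof of it.
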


\subsection{From graphs to words} \label{sect: phi} 
Willwacher constructed in \cite{WillwacherGRT} a morphism of graded Lie algebras from the degree-zero cohomology of the graph complex to the completed Ihara Lie algebra (see \S\ref{sec:Ihara-torsor} for the definition of its Lie bracket):
\begin{equation}\label{eq:graphs-to-words}
    \phi\colon H^0(\cGC_2) \to ( \widehat{\Lie}(\e_0,\e_1), \{  \ , \ \}) \ .
\end{equation}
It is induced by a map, which we also denote by $\phi\colon \gr\cGC_2 \rightarrow \Lie(\e_0,\e_1)$, that is defined on individual graphs. We sketch its definition; see \cite{WillwacherGRT} and \cite[\S7]{RossiWillwacher:Etingof} for details.

Let $[G,o]\in\cGC_2$ be an oriented graph.
For any vertices $x,y\in V_G$ and any edge $f\in E_G$, define a Lie word $\phi_{x,y}^f\in \Lie(\e_0,\e_1)$ as follows:
\begin{itemize}
    \item We require that $x$ and $y$ are adjacent (the edge $xy\in E_G$ exists), that $f$ is incident to $x$ but not to $y$, that $G\setminus\{x,y\}$ is a tree, and that every vertex $v\in V_G\setminus\{x,y\}$ is trivalent. If any of these conditions are violated, set $\phi_{x,y}^f\defas 0$.
    \item If all conditions hold, root the graph $G\setminus\{f,xy\}$ at the vertex $r$ on the other end of $f=xr$. Then each vertex $v\in V\setminus\{x,y\}$ has precisely two children $v',v''\in V$, i.e.\ outgoing edges $vv'$ and $vv''$. Assign Lie polynomials $L\colon V_G\longrightarrow \Lie(\e_0,\e_1)$ to each vertex by $L(x)\defas \e_1$, $L(y)\defas \e_0$, and the recursion
    \begin{equation*}
        L(v)=
            [L(v'),L(v'')] .
    \end{equation*}
    \item Define on orientation of $G$ by
    \begin{equation*}
        o_{x,y}^f\defas f\wedge xy \bigwedge_{v\in V_G\setminus\{x,y\}} vv'\wedge vv''.
    \end{equation*}
    \item Set $\phi_{x,y}^f\defas L(r)\cdot o^f_{x,y}/o$.
\end{itemize}




Note that the sign $o_{x,y}^f/o\in\{1,-1\}$ and $L(r)$ individually depend on the choice of child labels $v'\leftrightarrow v''$, but their product $\phi_{x,y}^f(a,b)$ does not.
We can thus define
\[
\phi(G) = \sum_{x,y,f}
\left(\phi_{x,y}^{f} - \phi_{x,y}^f\big|_{\e_0\leftrightarrow\e_1}\right)
.\] 
By construction, $\phi(G)$ is antisymmetric under the interchange of $\e_0$ and $\e_1$.
Also note that the length of $\phi(G)$ is equal to $n$, the number of vertices of $G$ minus 1.

\begin{figure}
    \centering
    \includegraphics{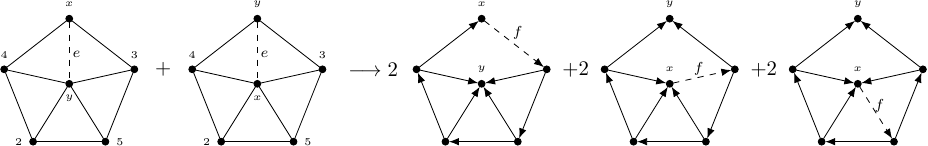}
    \caption{The 3 types of Lie trees obtained from the wheel with 5 spokes graph.}%
    \label{fig:phi-trees}%
\end{figure}
\begin{ex}
	Consider the five wheel graph $W_5$. For a non-zero contribution $x$ or $y$ must be the $5$-valent central vertex.
    The five possible choices for the other vertex are isomorphic. 
	Fixing such a pair $(x,y)$, there are two choices for $f$ if $x$ is a trivalent vertex and four choices if $x$ is the center vertex.
    Identifying terms by reflection in the vertical axis, we obtain the three directed trees of figure~\ref{fig:phi-trees}.
    
    Labelling $x$ as $\e_0$ and $y$ as $\e_1$ we find the following three Lie elements respectively, where the signs are determined by orienting $W_5$ with $o = 
    xy \wedge x2 \wedge x3 \wedge x4 \wedge x5 \wedge 3y \wedge 35 \wedge 4y \wedge 42 \wedge 52$:
    \[
    2 [[[[\e_0, \e_1], \e_1], \e_1], \e_1] + 2 [\e_1, [[[\e_1, \e_0], \e_0], \e_0]] - 2 [[\e_1, \e_0], [[\e_1, \e_0], \e_0]]
    \]
	Considering the opposite labelling $x\leftrightarrow y$ and accounting for the symmetry factors, we get
	\begin{align*}
\phi(W_5) = & 10 [[[[\e_0, \e_1], \e_1], \e_1], \e_1] + 10 [\e_1, [[[\e_1, \e_0], \e_0], \e_0]] - 10 [[\e_1, \e_0], [[\e_1, \e_0], \e_0]]\\
        -&10 [[[[\e_1, \e_0], \e_0], \e_0], \e_0] - 10 [\e_0, [[[\e_0, \e_1], \e_1], \e_1]] + 10 [[\e_0, \e_1], [[\e_0, \e_1], \e_1]]
        .
    \end{align*}
\end{ex}

\begin{defn} \label{defn: phielementdefn}
By extending $\phi$ linearly, we may define canonical elements\footnote{In the notation $\tau_{2k+1}$ of \cite{RossiWillwacher:Etingof}, these elements are $\phi_{2k+1}=(2\ipi)^{2k+1}\tau_{2k+1}$.}
\[  \phi_{2k+1} =  \phi(\cochain^{\can}_{4k+1}) = \phi(\cochain^{\RW}_{4k+1}) \, \in \, \Lie(\e_0,\e_1)\otimes\RR\]
of length $2k+1$ for every $k\geq 1$. 
\end{defn}
We will later appeal to \cite[Theorem~1.4]{RossiWillwacher:Etingof}, which is about the elements $\phi(\cochain^{\RW}_{4k+1})$. Thanks to theorem~\ref{thm:RWeqCan}, these are in fact results about the canonical cocycles from $\omega^{4k+1}$.

\section{Motivic Lie algebra and \texorpdfstring{$\mathbb{P}^1 \setminus \{0,1,\infty\}$}{P\^{}1\textbackslash\{0,1,infinity\}}} \label{sec:motivics}

\subsection{Background on the projective line minus three points}
For further details and references of the material in this section, see \cite[\S2]{Brown:ICM2014}, \cite[\S4]{brownSVMZV}, and \cite{deligneP1}.

\subsubsection{de Rham}
Let $X= \mathbb{P}^1\setminus \{0,1,\infty\}$. Deligne's de Rham fundamental torsor
\[
    \opi \defas \pi_1^{\dR} ( X, \tone_0 ,  -\tone_1  ) 
\]
of paths on $X$ from the tangent vector $1$ at $0$ to the tangent vector $-1$ at $1$ is an affine scheme over $\QQ$. Its affine ring is the graded tensor coalgebra (with deconcatenation coproduct)
\begin{equation*}
    \mathcal{O}( \opi) 
    \cong  T (H^1_{\dR}( X) )  
    \cong T(\QQ \e_0 \oplus \QQ \e_1) 
    = \bigoplus_{w \in \{\e_0,\e_1\}^{\times} } w \, \QQ
\end{equation*}
which is generated by words $w$ in the letters $\e_0,\e_1$, including the empty word, and graded by length. We will refer to the length also as \emph{weight}, as it is the Hodge-theoretic weight divided by two. The generator $\e_0$ (resp.\ $\e_1$) corresponds to the de Rham cohomology class of the logarithmic form $\frac{\td x}{x}$ (resp.\ $\frac{\td x}{1-x}$) on $X$. 
Equipped with the weight grading and the commutative shuffle product, $ \mathcal{O}( \opi) $ is a graded and commutative $\QQ$-algebra. Its unit is the empty word. Picking out the coefficient of the empty word defines an augmentation $\mathcal{O}( \opi)\rightarrow \QQ $ and thus a point of $\opi$. It is called the canonical de Rham path and denoted
\[ \dchDR \in \opi(\QQ). \]
For any commutative ring $R$, any $R$-point $S\colon \mathcal{O}( \opi)\longrightarrow R $ of $\opi$ can be encoded in a formal power series in two non-commuting variables $\x_0,\x_1$ dual to $\e_0,\e_1$ of the form
\[  S=  \sum_{w\in \{\x_0,\x_1\}^{\times}} S_w\, w\]
where $S_w=S(w|_{\x\mapsto\e})$ is the value of $S$ on the word $w$ (viewed in the letters $\e_0,\e_1$). That $S$ is a homomorphism for the shuffle product implies constraints among the coefficients $S_w$. These can be written as $\Delta S=S\hat{\otimes} S$ in terms of the completed coproduct for which $\x_0$ and $\x_1$ are primitive. Therefore, the $R$-points of $\opi$ are the group-like elements $\opi(R)\cong\{S\colon \Delta S=S\hat{\otimes}S\}\subset R \llangle \x_0,\x_1\rrangle^{\times}$.

\subsubsection{Betti}
The Betti fundamental torsor of paths $\pi_1^{B} ( X, \tone_0 ,  -\tone_1  )$ is also an affine scheme over $\QQ$, and it is a bitorsor over the unipotent completions of the fundamental groups based at both of these tangential base points. There is a canonical map from the topological fundamental group into its set of rational points: 
\[  \pi_1 ( X(\CC), \tone_0 ,  -\tone_1  )\To \pi_1^{B} ( X, \tone_0 ,  -\tone_1  )(\QQ) \ . \]
The straight line (`droit chemin') from $0$ to $1$ defines an element in the left-hand side; we denote its image in the right-hand side by $\dchB$. 
The comparison isomorphism 
\[  \comp\colon \pi_1^{B} ( X, \tone_0 ,  -\tone_1  )(\CC) \overset{\sim}{\To} \opi(\CC) \  \]
is given explicitly by computing iterated integrals in the one forms $\frac{\td x}{x}, \frac{\td x}{1-x}$ along (completions) of topological paths. The image of $\dchB$ under this map is
\begin{equation}  \label{Zassocdef}
    \mathcal{Z} \defas \comp \big( \dchB \big) 
    =  \sum_{w\in \{\x_0,\x_1\}^{\times}} w \, \zeta(w)   \in \RR \llangle \x_0,\x_1 \rrangle
\end{equation}
where $\zeta(w)$ is the shuffle-regularised iterated integral from $0$ to $1$. It is a rational linear combination of multiple zeta values. In particular, for $w$ starting in $\x_1$ and ending in $\x_0$,
\begin{equation*}
    \zeta(\x_1\underbrace{\x_0\ldots\x_0}_{n_1-1}\x_1\underbrace{\x_0\ldots\x_0}_{n_2-1}\ldots\x_1\underbrace{\x_0\ldots\x_0}_{n_d-1})
    = \sum_{1\leq k_1<\cdots<k_d} \frac{1}{k_1^{n_1}\cdots k_d^{n_d}}
    .
\end{equation*}
The generating series $\mathcal{Z}$ is Drinfeld's associator. 

\subsubsection{Ihara action}\label{sec:Ihara-torsor}
The \emph{Ihara group law} on $\opi$ is defined by
\begin{eqnarray}\label{eq:Ihara-group}
\circ\colon  \opi \times \opi    &  \To &     \opi   \\ 
S \circ Z & = &  Z( \x_0, S  \x_1  S^{-1})\, . \, S \nonumber
\end{eqnarray}
and gives $(\opi,\circ)$ the structure of a group (the unit is $\dchDR$). Thus $\opi$ is also a torsor for this group, when we view $\circ$ as a left-action. In particular, for any two points $Z,Z'$ of $\opi$ there exists a \emph{unique} element $S$ in the group $(\opi,\circ)$ such that $S\circ Z=Z'$.

Infinitesimally, the Ihara group law induces a new Lie algebra structure on the graded Lie algebra $\Lie(\x_0,\x_1)$ of $\opi$. This \emph{Ihara bracket} is
\begin{equation}\label{eq:Ihara-bracket}
    \{ \sigma, \tau \} = [\tau,\sigma] - D_\sigma(\tau) + D_\tau(\sigma)
\end{equation}
where $D_\sigma$ denotes the derivation of $\Lie(\x_0,\x_1)$ such that $D_\sigma(\x_0)=0$ and $D_\sigma(\x_1)=[\x_1,\sigma]$.

\subsubsection{Motivic Lie algebra} \label{sect: MotivicLie}
The category $\MT(\ZZ)$ of mixed Tate motives over the integers \cite{LevineMotives, delignegoncharov} is a Tannakian category over $\QQ$ whose Tannaka group with respect to the de Rham fiber functor $\omega_{\dR}$ will be denoted by $G^{\dR} = \mathrm{Aut}^{\otimes}_{\MT(\ZZ)}(\omega_{\dR})$.
Let $U^{\dR}$ denote its unipotent radical. Since the fiber functor $\omega_{\dR}$ is graded, one deduces that 
\begin{equation} \label{GdRsplits} 
    G^{\dR} \cong U^{\dR} \rtimes \mathbb{G}_m\ .
\end{equation}
The graded Lie algebra of $U^{\dR}$ will be  denoted  by $\gm$ and will simply be   called  the `motivic Lie algebra' for short. 
The scheme $P^{B,\dR} = \mathrm{Isom}^{\otimes}_{\MT(\ZZ)}(\omega_{\dR},\omega_B)$ of isomorphisms of fiber functors from $\omega_{\dR}$ to $\omega_B$  is a torsor: 
\begin{equation} \label{Isomtorsor} G^{\dR} \times  P^{B,\dR}    \To   P^{B,\dR}\ . 
\end{equation}
The comparison isomorphism defines a canonical complex point 
\[ \comp_{B,\dR} \in  P^{B, \dR} (\CC)\ . \]

\begin{remark}  \label{remark: leftandright} In this paper motivic Galois groups will act on the \emph{right} of de Rham realisations of motives, so that it acts on the left on schemes in the category of motives, which is the more common convention. 
This issue is often overlooked in the literature but can lead to contradictory formulae if one is not careful. Thus \eqref{Isomtorsor} denotes the left-action obtained by precomposing an element in $P^{B,\dR}$ with the right-action by an element of $G^{\dR}$.
\end{remark}

Since the vector space $\mathcal{O}(\opi)$ is the de Rham realisation of an ind-object in the category $\MT(\ZZ)$, it follows from the Tannaka theorem that there is a (right-) action of $G^{\dR}$ on $\mathcal{O}( \opi)$, or equivalently, a (left-) action $G^{\dR} \times  \opi   \rightarrow \opi$. By acting on the canonical de Rham path $\dchDR$ we deduce a morphism of affine schemes over $\QQ$:
\begin{equation} \label{Gdrtoopi}
    g\mapsto g. \dchDR\colon G^{\dR}  \To  \opi \ .  
\end{equation}
With respect to the Ihara group law \eqref{eq:Ihara-group}, this map $G^{\dR} \rightarrow (\opi, \circ)$ is a morphism of affine group schemes. Restricting to $U^{\dR}$ and passing to graded Lie algebras, we obtain a map 
\begin{equation} \label{gmtoLie}
    \iota\colon \gm \To   ( \Lie(\e_0,\e_1) , \{  \ ,  \ \})
\end{equation}
of graded Lie algebras where $\{, \}$ is the Ihara Lie bracket \eqref{eq:Ihara-bracket}. Deligne predicted that $\iota$ is injective. This was established in \cite{BrMTZ}. Consequently, we can identify $\gm$ with its image under $\iota$ to save having to write $\iota \gm$ systematically when it is clear from the context. 
 
The structure of the category $\MT(\ZZ)$ implies, via Borel's calculation of the rational algebraic $K$-theory of $\ZZ$, that $\gm$ is a free graded Lie algebra with one generator in every odd degree $2k+1$, for $k\geq 1$. For simplicity of exposition, let us choose generators $\sigma_{2k+1}$ in degree $2k+1$ once and for all. Thus we have
\begin{equation*}
    \frac{\gm}{[\gm,\gm]} \cong H_1(\gm) \cong  \bigoplus_{k\geq 1} [\sigma_{2k+1}] \QQ  \ .
\end{equation*}
 The images $[\sigma_{2k+1}]$ of the generators in the abelianisation of $\gm$ are canonical,  the elements $\sigma_{2k+1}$ themselves are not. 
Normalized in the standard way, they satisfy
\begin{equation} \label{sigmas}
    \iota(\sigma_{2k+1}) =  \ad(\e_0)^{2k} (\e_1) + \hbox{ terms with two or more } \e_1\hbox{'s} \ .
\end{equation}

\subsubsection{Depth filtration} The  arguments which follow  can be carried out using abelianisations but a more surgical approach is  to use the depth filtration. It is denoted $ D^i\,  \Lie(\e_0,\e_1)$ and is the decreasing filtration associated to the depth-degree, for which $\e_0$ has depth $0$ and $\e_1$ has depth $1$. Thus  $ D^0=  \Lie(\e_0,\e_1)$, and $D^i$ is generated by Lie brackets of elements with $\geq i$ occurrences of $\e_1$. One can show that the depth filtration is preserved by the Ihara bracket and induces a filtration, also denoted by  $D^i$,  on $\gm$ via \eqref{gmtoLie}.
It satisfies $D^1 \gm= \gm$ as can be seen from \eqref{sigmas}. 
%
%
Consider the natural map 
\begin{equation} \label{Depth1iota} \gr^1_D \iota\colon  \gr^1_D  \gm \To \gr^1_D   \Lie(\e_0,\e_1) \cong \bigoplus_{n\geq 0} \ad(\e_0)^{n}\e_1 \QQ\ . \end{equation} 
Since $[\gm, \gm] \subset D^2 \gm$, it  it induces an injective map 
\begin{equation}\label{H1gmtodepth1} H_1(\gm)   \To      \bigoplus_{k\geq 1} \ad(\e_0)^{2k}\e_1 \QQ  \end{equation} 
which sends  ${[}\sigma_{2k+1}]$ to $\ad(\e_0)^{2k}\e_1$ for every $k\geq 1$. 

\subsection{Motivic associators}
The scheme of motivic associators is implicitly defined in \cite{BrMTZ, brownSVMZV} but not given a name. For the sake of clarity we give a different presentation of  these objects  here and spell out some of their properties. 

\begin{defn} Let $\mathbb{A} \leq \opi$ and $\mathbb{H} \leq \opi$ be the subschemes defined by the Zariski-closures of the following orbits under $P^{B,\dR}(\QQ)$ or $G^{\dR}(\QQ)$, respectively:  
\[ \mathbb{H} =  \overline{ P^{B, \dR}(\QQ)\, \dchB} \quad \hbox{ and }  \quad \mathbb{A} =  \overline{ G^{\dR}(\QQ)\, \dchDR}\ .\]
\end{defn}
Another, more  intuitive, way to define these schemes is to  use the notion of motivic  and de Rham periods. For any $w\in\mathcal{O}(\opi) $ a word in $\e_0,\e_1$, define  
\begin{equation*}
    \zeta^{\mm}(w) = \big[ \mathcal{O}(\opi),  \dchB, w \big]^{\mm}
    \qquad \hbox{ and } \qquad
    \zeta^{\dR}(w) = \big[ \mathcal{O}(\opi),  \dchDR, w \big]^{\dR} \ .
\end{equation*}
The former is called a  motivic multiple zeta value. It is defined to be the  element  in  $\mathcal{O}(P^{B,\dR})$  given by the map $\phi\mapsto \dchB(\phi(w))$ for any  isomorphism of fiber functors $\phi \in P^{B,\dR}$. The latter, called a de Rham multiple zeta value, is the element of $\mathcal{O}(G^{\dR})$ defined by $g\mapsto  \dchDR(gw)$. 
 The motivic (resp. de Rham) multiple zeta values satisfy algebraic relations called the motivic (resp. de Rham) relations and generate a graded ring we shall  denote by $\mathcal{P}^{\mm}$ (resp. $\mathcal{P}^{\dR}$).  
Consider the surjective $\QQ$-linear maps 
\begin{eqnarray} \label{mapopitomotivicz} \mathcal{O}(\opi)\cong T (\e_0 \QQ \oplus \e_1 \QQ)  &\To &  \mathcal{P}^{\bullet}     \\ 
w  & \mapsto & \zeta^{\bullet}(w)  \nonumber 
\end{eqnarray}
where $\bullet \in \{\mm, \dR\}$. Their kernels  are  the graded  ideals $\mathcal{J}^{\mm}$ (resp.   $\mathcal{J}^{\dR}$) of motivic (resp. de Rham) relations. 
It follows from this description that  $\mathbb{A}, \mathbb{H} \leq \opi$ satisfy 
\[     \mathbb{H}=\mathrm{Spec} \, (\mathcal{O}(\opi)/\mathcal{J}^{\mm})   \qquad   \hbox{ and } \qquad 
 \mathbb{A}=\mathrm{Spec} \, (\mathcal{O}(\opi)/\mathcal{J}^{\dR})  \ .  \]
 Thus $\mathbb{H}$ is precisely the  affine algebraic variety defined by all motivic relations between motivic multiple zeta values. 
 Equivalently, since   \eqref{mapopitomotivicz} is surjective, one can view $\mathcal{P}^{\mm}$  (resp.  $\mathcal{P}^{\dR}$)  as the affine ring of $\mathbb{H}$ (resp.  $\mathbb{A}$). 
One proves  that the ideal $\mathcal{J}^{\dR}$ is  generated by  $J^{\mm}$ together with  the single additional  relation $\zeta^{\dR}(2)=0$.

 \begin{remark} Evaluation on $\comp_{B,\dR} \in P^{B,\dR}(\CC)$ defines a canonical period homomorphism $\mathrm{per}\colon \mathcal{P}^{\mm}\rightarrow \RR$ which sends $\zeta^{\mm}(w)$ to $\zeta(w)$, the shuffle-regularised multiple zeta value. In this way the ideal of motivic relations define actual relations between multiple zeta values. It is conjectured that all relations between multiple zeta values are motivic.
 \end{remark}

The scheme $\mathbb{H}(\mathcal{P}^{\mm})$ contains a canonical point corresponding to the  map $\mathcal{O}(\opi)\rightarrow \mathcal{P}^{\mm}$ given by \eqref{mapopitomotivicz}. It is called the motivic Drinfeld associator $\mathcal{Z}^{\mm} \in \mathbb{H}(\mathcal{P}^{\mm})$ and satisfies
\begin{align*}
    \mathcal{Z}^{\mm} 
    &= \sum_{w\in \{\x_0,\x_1\}^{\times}} \zeta^{\mm}(w)\,w 
    =  1 + \zeta^{\mm}(2) [\x_1,\x_0] 
    +\zeta^{\mm}(3)\big([\x_0,[\x_0,\x_1]]+[\x_1,[\x_1,\x_0]]\big)
    + \ldots
\end{align*}
The image of $\mathcal{Z}^{\mm}$ under the period homomorphism is the Drinfeld associator $\mathcal{Z} = \mathrm{per} (\mathcal{Z}^{\mm})$. 
Similarly, the map   $\mathcal{O}(\opi)\rightarrow \mathcal{P}^{\dR}$ given by \eqref{mapopitomotivicz} defines an element 
\[ \mathcal{Z}^{\dR} =  \sum_{w\in \{\x_0,\x_1\}^{\times} } \zeta^{\dR}(w) w   \quad \in \quad \mathbb{A}(\mathcal{P}^{\dR}) \ .  \]
To lowest order in $\x_1$ one has  
\begin{equation} \label{ZdRexpansion} \mathcal{Z}^{\dR}= 1+ \sum_{k \geq 1} \zeta^{\dR}(2k+1) \, \ad(\x_0)^{2k}(\x_1) +  \hbox{ terms of degree } \geq 2 \hbox{ in } \x_1 
\end{equation} using the the shuffle relations for motivic (and hence de Rham) iterated integrals, and the fact that  all even de Rham zeta values $\zeta^{\dR}(2n)$ vanish \cite{BrMTZ}. A  choice of generators $\sigma_{2k+1}$ for $\gm$ may be obtained from $\mathcal{Z}^{\dR}$ by reading off the coefficients of $\zeta^{\dR}(2k+1)$ with respect to a suitable choice of basis of  $\mathcal{Z}^{\dR}$.

We have a commutative diagram (see remark \ref{remark: leftandright}):  
\begin{equation} \label{commdiag}
\begin{tikzcd}[column sep=large]
G^{\dR} \times P^{B,\dR}    \arrow[r] \arrow[d] &
P^{B,\dR} \arrow[d] \\
\mathbb{A}\times \mathbb{H}   \arrow[r] \arrow[d,hook] &
\mathbb{H} \arrow[d,hook] \\
\opi \times \opi \arrow[r, "\circ"] &
\opi
\end{tikzcd}
\end{equation}
The vertical maps on the bottom are closed immersions of subschemes. The map along the bottom is the Ihara action.  Note that $\mathbb{H} $ is not a torsor over $\mathbb{A}$, since on the level of affine rings one has 
\[\mathcal{O}(\mathbb{H}) = P^{B,\dR} \cong \mathcal{P}^{\dR} \otimes_{\QQ} \QQ[t^2] \cong \mathcal{O}(\mathbb{A}) \otimes_{\QQ} \QQ[t^2]\ , \] where $t$ is the coordinate on  the semi-simple quotient $\mathbb{G}_m$ of $G^{\dR}$.   The element $t^2$ corresponds to $24 \,\zeta^{\mm}(2)$ (or  equivalently the motivic version of $(2\pi i)^2$ by \cite[\S2.3]{BrMTZ}).

\begin{lem}\label{lemlogZdR} One has 
\[ \mathbb{A} = \overline{ U^{\dR}(\QQ) \dchDR}\ . \]
In particular the element $\mathcal{Z}^{\dR} \in \mathbb{A}(\mathcal{P}^{\dR}) \leq \opi(\mathcal{P}^{\dR})$ is in the image of the pro-unipotent radical $U^{\dR}$. We may therefore take its logarithm with respect to the Ihara group law $\circ$, which defines an element in the completed Lie algebra in $\x_0,\x_1$:
\[ \log^{\circ} \mathcal{Z}^{\dR}\quad  \in \quad  \widehat{\Lie(\x_0,\x_1)}\, \widehat{\otimes}_{\QQ} \, \mathcal{P}^{\dR}  \]
Its depth-graded component in weight $2k+1$ is in the image of  $\gr^1_D  \gm\otimes \gr^W_{2k+1} \mathcal{P}^{\dR}$ under the map  defined in equation \eqref{Depth1iota}, more precisely, for all $k\geq 1$:  
\[  \gr^W_{2k+1} \gr^1_D  \log^{\circ} \mathcal{Z}^{\dR}     =   \zeta^{\dR}(2k+1)\, \ad(\x_0)^{2k} \x_1  \ .   \]
\end{lem}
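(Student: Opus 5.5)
The plan has three parts: first identify $\mathbb{A}$ with the closure of the $U^{\dR}$-orbit, then take the Ihara logarithm, and finally read off its depth-one part from \eqref{ZdRexpansion}.

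\emph{Step 1: the orbit.} Use the semidirect product decomposition \eqref{GdRsplits}, $G^{\dR}\cong U^{\dR}\rtimes\mathbb{G}_m$. The grading on $\mathcal{O}(\opi)$ by weight is the action of $\mathbb{G}_m$. The canonical de Rham path $\dchDR$ is the augmentation, i.e.\ projection onto the weight-zero part, so it is fixed by $\mathbb{G}_m$: for $\lambda\in\mathbb{G}_m(\QQ)$ one has $\lambda.\dchDR=\dchDR$. Writing $g=u\lambda$ gives $g.\dchDR=u.\dchDR$. Hence $G^{\dR}(\QQ)\dchDR=U^{\dR}(\QQ)\dchDR$, and taking Zariski closures yields $\mathbb{A}=\overline{U^{\dR}(\QQ)\dchDR}$.

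\emph{Step 2: the logarithm.} More intrinsically, the morphism \eqref{Gdrtoopi} restricted to $U^{\dR}$ is a homomorphism to $(\opi,\circ)$, and $U^{\dR}$ is pro-unipotent. Its scheme-theoretic image is therefore a closed pro-unipotent subgroup $V\le(\opi,\circ)$. The map $G^{\dR}\to\opi$ factors through $G^{\dR}\to U^{\dR}$ by Step 1, so $\mathbb{A}=V$ (images of affine group schemes are closed). The universal point $\mathcal{Z}^{\dR}\in\mathbb{A}(\mathcal{P}^{\dR})$ is the image of the universal element $\mathrm{id}\in U^{\dR}(\mathcal{O}(U^{\dR}))$, pushed along $\mathcal{O}(U^{\dR})\cong\mathcal{O}(G^{\dR})^{\mathbb{G}_m\text{-inv}}\supset\mathcal{P}^{\dR}$. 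On this point the Ihara logarithm is well defined, since $\circ$-powers are computed weight by weight and the series converges in the completion. The result lies in $\mathrm{Lie}(V)\widehat\otimes\mathcal{P}^{\dR}=\iota(\gm)\widehat\otimes\mathcal{P}^{\dR}$. Grading by weight, the weight $2k+1$ piece lies in $\gm_{2k+1}\otimes\gr^W_{2k+1}\mathcal{P}^{\dR}$, since coefficients of words of length $n$ have weight $n$.

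\emph{Step 3: depth one.} Expand $\mathcal{Z}^{\dR}=\exp^{\circ}(L)$ with $L=\log^\circ\mathcal{Z}^{\dR}$. Here $\exp^\circ(L)=1+L+\tfrac12 L\circ L+\cdots$, where the higher terms are built from products and Ihara actions of $L$ with itself. Every component of $L$ has depth $\ge1$ by \eqref{sigmas}, since $D^1\gm=\gm$. The Ihara product $S\circ Z=Z(\x_0,S\x_1S^{-1})S$ shows that any term quadratic or higher in $L$ has depth $\ge 2$: substituting $S\x_1S^{-1}$ adds no depth-zero letters beyond those already present, and concatenation adds depths. Hence $\gr^1_D\mathcal{Z}^{\dR}=\gr^1_D L$ in each weight. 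Comparing with \eqref{ZdRexpansion} then gives $\gr^W_{2k+1}\gr^1_D L=\zeta^{\dR}(2k+1)\ad(\x_0)^{2k}\x_1$, which lies in the image of $\gr^1_D\gm\otimes\gr^W_{2k+1}\mathcal{P}^{\dR}$ via \eqref{Depth1iota}.

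\emph{Main obstacle.} The main obstacle is the bookkeeping in Step 2. One must check that $\mathcal{Z}^{\dR}$ is genuinely a $\mathcal{P}^{\dR}$-point of the subgroup $\mathbb{A}=\operatorname{im}(U^{\dR})$, so that $\log^\circ$ lands in $\gm$ and not merely in $\Lie(\x_0,\x_1)$. I would handle this with the identification $\mathcal{O}(\mathbb{A})=\mathcal{O}(\opi)/\mathcal{J}^{\dR}$ as a quotient Hopf algebra for the Ihara coproduct. This identifies its primitive Lie coalgebra dually with $\gm$, so that logarithms of points are primitive tensors.
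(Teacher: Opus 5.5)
Your proposal is correct and follows essentially the same route as the paper. It uses three ingredients: the $\mathbb{G}_m$-invariance of $\dchDR$ to replace $G^{\dR}$-orbits by $U^{\dR}$-orbits; the exp/log isomorphism for pro-unipotent groups, combined with \eqref{Gdrtoopi} being a homomorphism for $\circ$; and the fact that $\exp^{\circ}(\alpha)\equiv 1+\alpha$ modulo depth $\geq 2$ (because $\{D^1,D^1\}\subset D^2$), which is then compared with \eqref{ZdRexpansion}. Two minor remarks: your ``main obstacle'' is immediate, since $\mathcal{Z}^{\dR}\in\mathbb{A}(\mathcal{P}^{\dR})$ holds by construction; and the term $\tfrac12 L\circ L$ should be read via the linearised Ihara action, because the group-law formula only makes sense for invertible arguments.
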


\begin{proof} The action of an element  $\lambda$ in the semi-simple quotient $\mathbb{G}_m$ on $\mathcal{O}(\opi)$ is  by multiplication by $\lambda^w$ in graded weight $w$. The element $\dchDR$ is defined by the augmentation map 
$\epsilon\colon \mathcal{O}(\opi)\rightarrow \gr^W_0\mathcal{O}(\opi)\cong  \QQ $ which is projection on to graded weight $0$. It follows that $\mathbb{G}_m$ acts trivially on $\dchDR$ and so its $U^{\dR}$-orbit and $G^{\dR}$-orbits coincide. Alternatively, one could also argue using the fact that $\zeta^{\dR}(2)=0$. 
Next, use the fact that for a pro-unipotent affine group scheme $U$ over a field of characterstic zero with pro-nilpotent Lie algebra $\mathfrak{u}$, the exponential map $\exp\colon \mathfrak{u} \overset{\sim}{\rightarrow} U$ is an isomorphism of affine schemes. Its inverse is the logarithm.   Now, since by \eqref{Gdrtoopi} the Ihara action and group multiplication  in $G^{\dR}$ coincide, the logarithm of $\mathcal{Z}^{\dR}$ exists and is the image of  an element in the Lie algebra of $U^{\dR}$ (which is the completion of the graded Lie algebra $\gm$), with coefficients in $\mathcal{P}^{\dR}.$ 
    For the last part, use the expansion \ref{ZdRexpansion}  and the fact that $\{D^1, D^1\} \subset D^2$ in the Lie algebra of $\opi$, which implies that 
    \[ \exp^{\circ} (\alpha) \equiv  1 + \alpha  \pmod{\hbox{terms of degree } \geq 2 \hbox{ in } \x_1}\   \]
    for any $\alpha  \in  \widehat{\Lie(\x_0,\x_1)}$, 
    where $\exp^{\circ}$, which is the inverse to $\log^{\circ}$,  is the exponential with respect to the Ihara group law $\circ$. 
\end{proof}

\subsection{Single-valued periods}  The following material is discussed in more detail in \cite{brownSVMZV}. 
Let $\tau\colon \mathbb{G}_m \rightarrow G^{\dR}$ denote the splitting \eqref{GdRsplits}.  
Consider the element 
\[ \comp^{\sigma}= \tau(-1) \circ \overline{\comp}   \quad \in \quad P^{B,\dR}(\CC)\ . \]
Since $\mathcal{Z}$ has real coefficients, its image in $\mathbb{H}(\CC)$ is 
\[ \mathcal{Z}^{\sigma} =  \mathcal{Z}(-\x_0, -\x_1)  \quad \in \quad \mathbb{H}(\CC)\ .  \]
Note that if $F_{\infty}\colon M_B \rightarrow M_B$ is the real Frobenius involution on the Betti realisation of any mixed Tate motive $M$, then we can also  write  $\comp^{\sigma} = \tau(-1)  \circ \comp \circ F_{\infty}. $

\begin{defn}
Since $P^{B,\dR}$ is a torsor over $G^{\dR}$ by \eqref{Isomtorsor}, there exists a unique point $\sv \in G^{\dR}(\CC)$ such that 
\[  \sv . \comp^{\sigma}    = \comp \]  
\end{defn}
One shows in fact that $\sv$ is invariant under complex conjugation and hence  $\sv \in G^{\dR}(\RR)$ is real-valued. The element $\sv$ defines a single-valued period homomorphism
\begin{eqnarray} \label{svperiodhom} \sv\colon  \mathcal{P}^{\dR}  & \To &  \RR  \\ 
\zeta^{\dR}(w) & \mapsto & \zeta^{\sv}(w) \nonumber 
\end{eqnarray} 
where $\zeta^{\sv}(w)$ is the so-called single-valued multiple zeta value. One shows furthermore that $\sv \in U^{\dR}(\RR)$ since the image\footnote{This is precisely the purpose of the twist by $\tau(-1)$ in the definition of $\comp^{\sigma}$. Note that the splitting  \eqref{GdRsplits} is a particular feature of mixed Tate motives and does not generalise. Therefore the element $\sv$ is not to be confused with its version without the twist by $\tau(-1)$, which exists in much more general situations, and which we usually denote by $\mathsf{s}$. It is not pro-unipotent.  } of $\sv$ in $\mathbb{G}_m(\RR)$ is $1$. We define
\[ \mathcal{Z}^{\sv} = \sv ( \mathcal{Z}^{\dR}) = \sum_w \zeta^{\sv}(w) w  \  \in \  \mathbb{A}(\RR)\leq \opi(\RR)\ . \]
It follows from the diagram \eqref{commdiag} (see also \cite[\S5]{brownSVMZV}) that
\begin{equation}  \label{ZsvcircZ}
    \mathcal{Z}^{\sv} \circ \mathcal{Z}^{\sigma} = \mathcal{Z}\ .
\end{equation}
Furthermore $\mathcal{Z}^{\sv}$ is the unique element in $_0\Pi_1(\RR)$ which satisfies this equation, see \S\ref{sec:Ihara-torsor}.

Finally, for all $n\geq 2$, one computes 
\begin{equation}  \label{zetasvn} 
    \zeta^{\sv}(n) =  
    \begin{cases}
        2 \, \zeta(n) & \text{if $n$ is odd,} \\
        0             & \text{if $n$ is even.} \\
\end{cases}
\end{equation}

\begin{cor} \label{cor: grlogZ} It follows from lemma \ref{lemlogZdR} that 
\[  \gr^W_{2k+1} \gr^1_D  \log^{\circ} \mathcal{Z}^{\sv}=  2 \, \zeta(2k+1)  \,\ad(\x_0)^{2k} \x_1 \  .  \]
\end{cor}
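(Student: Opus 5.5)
The corollary comes from applying the single-valued period homomorphism \eqref{svperiodhom} to the identity of Lemma~\ref{lemlogZdR}. The plan is to check that $\sv$ commutes with $\log^{\circ}$ and with the gradings, and then evaluate $\zeta^{\sv}(2k+1)$ using \eqref{zetasvn}.

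\emph{Step 1: $\sv$ transports $\log^{\circ}$.} The element $\mathcal{Z}^{\sv}=\sv(\mathcal{Z}^{\dR})$ is obtained by applying the ring homomorphism $\sv\colon\mathcal{P}^{\dR}\to\RR$ coefficientwise. This is the map induced on points, $\opi(\mathcal{P}^{\dR})\to\opi(\RR)$. The Ihara product \eqref{eq:Ihara-group} is given by universal polynomial formulas in the coefficients, so this map is a group homomorphism for $\circ$. The same holds for $\exp^{\circ}$ and $\log^{\circ}$, which are defined by universal formulas. Hence
\[
\log^{\circ}\mathcal{Z}^{\sv}=(\mathrm{id}\,\widehat{\otimes}\,\sv)\big(\log^{\circ}\mathcal{Z}^{\dR}\big),
\]
and $\log^{\circ}\mathcal{Z}^{\sv}$ exists in $\widehat{\Lie(\x_0,\x_1)}\widehat{\otimes}\RR$. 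Because $\sv$ only changes coefficients, it commutes with taking the weight-graded and depth-graded pieces, which are defined in terms of the words in $\x_0,\x_1$.

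\emph{Step 2: evaluate.} Applying $\sv$ to the last identity of Lemma~\ref{lemlogZdR} gives
\[
\gr^W_{2k+1}\gr^1_D\log^{\circ}\mathcal{Z}^{\sv}=\zeta^{\sv}(2k+1)\,\ad(\x_0)^{2k}\x_1 .
\]
By \eqref{zetasvn} with $n=2k+1$ odd, $\zeta^{\sv}(2k+1)=2\zeta(2k+1)$, which is the stated formula.

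The only point needing care is Step~1: the grading $\gr^W$ here refers to the word-length grading on $\Lie(\x_0,\x_1)$, not a grading on the coefficient ring. This holds because $\sv$ does not preserve the weight on $\mathcal{P}^{\dR}$ but acts only on coefficients. There is no real obstacle beyond this bookkeeping.
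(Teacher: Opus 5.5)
Your proposal is correct and is essentially the paper's argument. You apply the single-valued period homomorphism $\sv\colon\mathcal{P}^{\dR}\to\RR$ coefficientwise to the last identity of Lemma~\ref{lemlogZdR}, using that this base change commutes with $\log^{\circ}$ on the pro-unipotent group $(\opi,\circ)$ and with the word-length and depth gradings, and then substitute $\zeta^{\sv}(2k+1)=2\zeta(2k+1)$ from \eqref{zetasvn}; your Step~1 just makes explicit what the paper leaves implicit.
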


\section{The canonical cocycles generate the motivic Lie algebra}\label{sec:can-are-motivic}
Using the one-forms $\alpha_{2k}  =  t^{2k}(1-t)^{2k} \td t$ on the unit interval $[0,1]$ and the elements $\phi_{2k+1}\in\Lie(\e_0,\e_1)\otimes\RR$ from definition~\ref{defn: phielementdefn}, we can consider the path-ordered exponential
\[
    \psi=  \PO \exp^{\circ} \left( \sum_{k\geq 1} \int_0^1 \alpha_{2k} \phi_{2k+1} \right)  \in \opi(\RR)
\]
in $(\opi, \circ)$. Explicitly this is the element 
\begin{equation}  \label{psiexpansion} \psi = \sum_{r\geq 0} \sum_{k_1,\ldots, k_r \geq 0} \left(\int_0^1 \alpha_{2k_1}\ldots \alpha_{2k_r}\right) 
\phi_{2k_1+1} \circ \cdots \circ \phi_{2k_r+1} 
\end{equation}
where the composition is with respect to the Ihara group law $\circ$, and $  \int_0^1 \alpha_{2k_1}\ldots \alpha_{2k_r} \in \QQ$ is an iterated integral. 
Thus 
\[ \psi = 1 + \sum_{k\geq 1}  c_{2k}\,  \phi_{2k+1}  + \sum_{k,\ell \geq 1}  c_{2k,2\ell}\,   \phi_{2k+1} \circ \phi_{2\ell+1} + \ldots \]
where the $c_{2k}$ are given by the values of the beta function
\begin{equation} \label{c2formula} c_{2k} = \int_{0}^1 \alpha_{2k} = \beta(2k+1, 2k+1) \qquad \hbox{ and hence } \qquad  c_{2k}^{-1} = (4k+1) \binom{4k}{2k} \ . \end{equation} 
 We are now in a position to prove the main technical result.

\begin{thm} \label{thm: maintechnical} The logarithm $\log^{\circ} \psi$ exists and lies in the completion  $\widehat{\gmr}$ of the motivic Lie algebra $\gmr=\gm \otimes \RR$. Its weight-graded components    
satisfy 
\begin{equation}   \label{grWlogpsi} \gr^W_{2k+1} \log^{\circ} \psi \equiv   c_{2k}  \, \phi_{2k+1}       \pmod{ \gr^W_{2k+1}   [ \gmr, \gmr ]   } \end{equation}
in $\gr^W_{2k+1}  \gmr $. 
From this it follows that 
\begin{equation}  
\phi_{2k+1}  \ \in \ \gmr \ . 
\end{equation}
In other words, the elements $\phi_{2k+1}$ are in the image of the motivic Lie algebra. Furthermore, for every $k\geq 1$,  their classes in  $H_1(\gmr) = (\gm)^{\mathrm{ab}}\otimes_{\QQ} \RR$ satisfy
\[  [ \phi_{2k+1} ] =  -2 (4k+1) \binom{4k}{2k} \zeta(2k+1) \,  [\sigma_{2k+1}]    \ . \]
It follows that  the elements $\phi_{2k+1}$ are generators of $\gmr$. 
\end{thm}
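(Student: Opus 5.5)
The plan is to identify $\psi$ with the single-valued Drinfeld associator, up to an explicit twist, using \cite[Theorem~1.4]{RossiWillwacher:Etingof}. Definition~\ref{defn: phielementdefn} converts their $\tau_{2k+1}$ into $\phi_{2k+1}$. By theorem~\ref{thm:RWeqCan}, their cocycles are our canonical ones, so their theorem applies to $\phi_{2k+1}=\phi(\cochain^\can_{4k+1})$. As I read it, it expresses (a version of) the Alekseev--Torossian/Etingof--Kazhdan associator as a path-ordered Ihara exponential of the $\tau_{2k+1}$ against the forms $\alpha_{2k}$. That associator is known to be the single-valued associator $\mathcal{Z}^{\sv}$ of \cite[\S5]{brownSVMZV}, via the characterisation \eqref{ZsvcircZ}.

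The first step is therefore to establish an identity of the form $\psi = \mathcal{Z}^{\sv}$, or $\psi=(\mathcal{Z}^{\sv})^{-1}$, or $\psi=\mathcal{Z}^{\sv}(-\x_0,-\x_1)$. The normalisations ($2\ipi$ factors, $2^{4k}$, orientations, $\e_0\leftrightarrow\e_1$, and the left/right action conventions of remark~\ref{remark: leftandright}) all have to be tracked to decide which. I expect this bookkeeping to be the main obstacle. The sign $-2$ in the final formula, against the $+2$ of corollary~\ref{cor: grlogZ}, suggests the correct statement is $\psi=(\mathcal{Z}^{\sv})^{-1}$ for $\circ$, or equivalently a sign flip $\x_i\mapsto-\x_i$ in odd weight. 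I would pin the form down by comparing depth-one parts in weight $3$ against the explicit wheel computation.

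Given this identity, lemma~\ref{lemlogZdR} applies: $\mathcal{Z}^{\dR}$ is the image of a point of $U^{\dR}$, and $\sv\in U^{\dR}(\RR)$. Hence $\mathcal{Z}^{\sv}=\sv(\mathcal{Z}^{\dR})$, and its $\circ$-inverse, lie in the image of $U^{\dR}(\RR)$. So $\log^\circ\psi$ exists and lies in $\widehat{\gmr}$, using the injectivity of $\iota$ \cite{BrMTZ}.

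Next I expand \eqref{psiexpansion} by the Magnus/Baker--Campbell--Hausdorff formula for the group law $\circ$. This gives $\log^\circ\psi=\sum_k c_{2k}\phi_{2k+1}+(\text{iterated Ihara brackets of the }\phi\text{'s})$, where the bracket terms in weight $2k+1$ involve only $\phi_{2j+1}$ with $j<k$. Membership $\phi_{2k+1}\in\gmr$ then follows by induction on $k$. The weight-$(2k+1)$ part of $\log^\circ\psi$ lies in $\gmr$, the bracket terms lie in $[\gmr,\gmr]$ by the inductive hypothesis, and $c_{2k}\neq0$. This gives \eqref{grWlogpsi} as well.

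For the abelianisation class, I pass to $\gr^1_D$ using the injective map \eqref{H1gmtodepth1}, which kills $[\gmr,\gmr]\subset D^2$. Corollary~\ref{cor: grlogZ}, with the sign from the identification in the first step, gives $c_{2k}[\phi_{2k+1}]=-2\zeta(2k+1)[\sigma_{2k+1}]$. Inverting $c_{2k}$ via \eqref{c2formula} yields the stated formula. Finally, since $\gmr$ is free and graded with one generator in each odd degree $\geq3$, and $\zeta(2k+1)\neq0$, the $\phi_{2k+1}$ have classes forming a basis of $H_1(\gmr)$. The graded Nakayama lemma for positively graded Lie algebras then shows they generate $\gmr$.
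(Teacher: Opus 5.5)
Your proposal is correct and follows the paper's proof. The paper reads \cite[Theorem~1.4]{RossiWillwacher:Etingof} directly as $\psi\circ\mathcal{Z}=\mathcal{Z}^{\sigma}$, and then \eqref{ZsvcircZ} with uniqueness in the Ihara torsor gives $\psi=(\mathcal{Z}^{\sv})^{\circ-1}$ outright, so no weight-3 calibration is needed; your second and third candidates coincide anyway, since applying $\x_i\mapsto-\x_i$ to \eqref{ZsvcircZ} shows $\mathcal{Z}^{\sv}(-\x_0,-\x_1)=(\mathcal{Z}^{\sv})^{\circ-1}$. The rest (Magnus expansion, Corollary~\ref{cor: grlogZ}) is the same, and your explicit induction on $k$, which puts the higher Magnus terms in $[\gmr,\gmr]$ only once the lower $\phi_{2j+1}$ are known to be motivic, together with the graded Nakayama argument for generation, usefully spells out steps the paper leaves implicit.
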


\begin{proof} 
It is shown in \cite[Theorem~1.4]{RossiWillwacher:Etingof}, that\footnote{%
In \cite{RossiWillwacher:Etingof}, the associators are denoted $\Phi_{\text{KZ}}=\mathcal{Z}$ and $\Phi_{\overline{\text{KZ}}}=\mathcal{Z}^\sigma$ and written in letters $X$ and $Y$ which are related to ours by $\x_0=X/(2\ipi)$ and $\x_1=Y/(2\ipi)$.}
\[ \psi \circ \mathcal{Z} = \mathcal{Z}^{\sigma} \ . \]
It follows from \eqref{ZsvcircZ} and uniqueness that
\[ \psi = (\mathcal{Z}^{\sv})^{\circ-1} \ . \] 
As established in the previous section, $\mathcal{Z}^{\sv} \in \mathbb{A}(\RR)$ lies in the orbit of the unipotent radical of the motivic Galois group. Since $\psi = (\mathcal{Z}^{\sv})^{\circ-1}$ we can therefore take its logarithm $\log^\circ \psi$, which lies in the completion of the motivic Lie algebra $\widehat{\gm}\otimes \RR$. 

Next we may compute the logarithm of $\psi$ by applying the expansion due to Magnus \cite{Magnus:ExpSol,Chen1957} in terms of iterated integrals of commutators: 
\begin{equation*}
    \log^{\circ}  \left( \PO \exp^{\circ} \int_0^1 A(t) \td t  \right)
    =  \int_0^1 A(t) \td t +  \frac{1}{2} \int_{0\leq t_1 \leq t_2 \leq 1} [A(t_1), A(t_2)]\, \td t_1 \td t_2 +\ldots
\end{equation*}
where $A(t) =\sum_{k\geq 1} \alpha_{2k} \phi_{2k+1}$ is an element in the $\RR[[t]]$-points of the   Lie algebra of $\opi$. 
Since all higher terms  vanish in the  abelianisation, we deduce that 
\[  \log^{\circ}  \left( \PO \exp^{\circ} \int_0^1 A(t) \td t  \right) \equiv  \int_0^1 A(t) \td t   \pmod { \big[\widehat{\gmr}, \widehat{\gmr} \big]  }   \ .   \]
We conclude that
\[ \log^{\circ} \psi \equiv \sum_{k\geq 1} c_{2k} \phi_{2k+1}   \pmod { \big[ \widehat{\gmr}, \widehat{\gmr} \big]  }  \ . \]
Taking the weight-graded part in degree $2k+1$ leads to \eqref{grWlogpsi}.
Since $\log^\circ\psi= -\log^\circ \mathcal{Z}^{\sv}$ and $\gr^W_{2k+1} \log^{\circ} \mathcal{Z}^{\sv} \in \gmr$ is motivic, we deduce that $\phi_{2k+1} \in \gmr$.
The last displayed equation in the statement follows on  applying the map \eqref{Depth1iota} which   sends $\log^{\circ} \mathcal{Z}^{\sv}$ to $\zeta^{\sv}(2k+1) [\sigma_{2k+1}]$ by corollary  \ref{cor: grlogZ}. Conclude using \eqref{c2formula} and  \eqref{zetasvn}, which yields the factor of $2$. 
\end{proof}

\section{Geometry}\label{sec:geometry}
This section reviews constructions in the literature that trace a path from the cohomology of the general linear group $\GL_n(\ZZ)$, where the canonical forms naturally live, via the moduli space tropical abelian varieties and tropical curves, to the graph complex. Combining these with theorem~\ref{thm: maintechnical}, we arrive at our main results.

\subsection{Canonical forms and cohomology of \texorpdfstring{$\GL_n(\ZZ)$}{GLn(Z)}}
Let $g\geq 1$ and let $\SPD_g$ denote the space of positive definite symmetric matrices of rank $g$.
The group $\GL_g(\ZZ)$ acts on $\SPD_g$ on the right via $X \mapsto g^\Transpose X g$.  
Since $\SPD_g$ is connected and contractible, the quotient  $\SPD_g/\GL_g(\ZZ)$ is an orbifold $K(\pi,1)$ for the group $\GL_g(\ZZ)$.
The matrix entries constitute smooth functions $X_{ij}\colon \SPD_g\To\RR$ and give rise to differential forms $\td X_{ij}\in\Omega^1(\SPD_g)$.
From these we can construct, for every $n\geq 1$, the smooth differential form
\[
    \omega^{n}= \tr \big(X^{-1} \td X\big)^n
    \in \Omega^n(\SPD_g).
\]
It is closed, vanishes for $n \not\equiv 1 \pmod{4}$, and is invariant under the action of  $\GL_g(\RR)$; see \cite{BrInvariant}. In particular it can be interpreted as a differential form on the locally symmetric space $\SPD_g / \GL_g(\ZZ)$ and it defines a class $[\omega^n]\in H^n(\SPD_g/\GL_g(\ZZ),\RR)\cong H^n(\GL_g(\ZZ),\RR)$.

For fixed $n$, the classes defined by $\omega^n$ in different genera are pull-backs of each other, along the block-diagonal inclusion $\GL_g\hookrightarrow\GL_{g+1}$, $X\mapsto\big(\begin{smallmatrix} X&0\\0&1\\ \end{smallmatrix}\big)$. So if we consider the graded exterior algebra generated by formal symbols $\omega^n$ in every degree $n=4k+1\geq5$,
\begin{equation*}
    \Omega^{\bullet}_{\can}  =  \bigwedge\nolimits^\bullet  \bigoplus_{k\geq 1}  \QQ \, \omega^{4k+1}
    \ ,
\end{equation*}
then we obtain a canonical map
\[
    \Omega^n_{\can}\otimes_{\QQ} \RR  \To \varprojlim_{g} \,  H^n( \SPD_g/\GL_g(\ZZ);\RR)
    \ .
\]
A consequence of Borel's celebrated work \cite{Borel} is that this map is an isomorphism. The surjectivity follows from the work of Matsushima \cite{Matsushima} and Garland \cite{Garland};  Borel's contribution is the injectivity. From this result, one deduces that the rational algebraic $K$-theory of the integers $K_n(\ZZ)\otimes_{\ZZ} \QQ$ vanishes for $n$ not congruent to 1 modulo 4 and has rank one for $n=4k+1$, $k\geq 1$.  

More refined statements were recently established in \cite{BrBord}. The vector space $\Omega^{\bullet}_c$ of forms of \emph{compact type} is defined, again, as the vector space over $\QQ$ spanned by non-trivial exterior products of symbols $\omega^{4k+1}$ for $k \geq 1$ but this time 
 bigraded by genus and degree, where the genus of $ \omega^{4k_1+1} \wedge \cdots \wedge \omega^{4k_r +1}$, with $k_1 < \cdots < k_r$, is defined to be $2k_r+1$, the maximal genus of its factors. In \cite{BrBord} it is shown that there is a canonical injective map  
\begin{equation} \label{Omegactocompactcohom}
    \Omega^n_c  \otimes_{\QQ} \RR \To   \bigoplus_{g>0}  H_c^{n+1} ( \SPD_g/\GL_g(\ZZ);\RR)
\end{equation}
which maps classes of genus $g$ on the left into the summand of rank $g$ on the right. 
In particular, setting $r=1$, each $\omega^{4k+1}$ defines a non-trivial class
\begin{equation}   \label{omegacompacttypeclasses} 
    \big[\omega_c^{4k+1} \big]   \in  H_c^{4k+2}( \SPD_{2k+1} / \GL_{2k+1}(\ZZ); \RR)
    \ .
\end{equation}
The subscript $c$ denotes `compact support': these classes are not to be confused with the cohomology classes in $H^{4k+1}(\SPD_{g}/\GL_g(\ZZ);\RR)$ of the forms $\omega^{4k+1}$ themselves (see above). Instead, the classes $[\omega_c^{4k+1}]$ have canonical representatives as \emph{relative} cohomology classes that are constructed as follows: one first proves that the forms $\omega^{4k+1}$ extend to the Borel--Serre compactification of $\SPD_{g} / \GL_{g}(\ZZ)$. In the special case when $g=2k+1$, one furthermore finds that these forms vanish along its boundary. 

The following theorem establishes a non-commutative product on the compactly-supported cohomology of the locally symmetric spaces for $\GL_g(\ZZ)$, for $g\geq 0$.
\begin{thm}\label{thm:GL-Hopf}
    The $(k,g)$-bigraded vector space 
    \[  \mathcal{H}_c \defas \bigoplus_{g,k\geq 0}  H^{k+g}_c(\SPD_g/\GL_g(\ZZ); \RR) \]
    can be given the structure of a graded-cocommutative Hopf algebra.
\end{thm}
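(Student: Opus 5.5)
The plan is to realise $\mathcal{H}_c$ as (the real points of) a graded dual of a cocommutative coalgebra-with-product built from Voronoi/perfect-cone complexes, following the construction of \cite{BCGP} (see also \cite{AMP}). Concretely, by Lefschetz duality on the orbifold $\SPD_g/\GL_g(\ZZ)$ (of real dimension $d_g=g(g+1)/2$), compactly supported cohomology $H^{k+g}_c$ is dual to ordinary homology $H_{d_g-k-g}(\SPD_g/\GL_g(\ZZ);\RR)$. Equivalently, I would identify $H^\bullet_c(\SPD_g/\GL_g(\ZZ);\RR)$ with the cohomology of the perfect cone (Voronoi) complex $V_g$ of $\GL_g(\ZZ)$ relative to the cones meeting the boundary of the rational closure $\SPD_g^{rt}$. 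This is possible because $\SPD_g^{rt}/\GL_g(\ZZ)$ with its Satake topology is a compactification whose boundary is the image of the non-positive-definite cones. I would set up this identification first, with rational coefficients and orientation characters handled as in the graph complex. Cells carrying orientation-reversing stabilisers drop out.

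Next I would define the two operations at the chain level. The \emph{product} comes from block-direct sum of quadratic forms, $\SPD_{g_1}\times \SPD_{g_2}\to \SPD_{g_1+g_2}$, $(X_1,X_2)\mapsto X_1\oplus X_2$. This map sends perfect cones into perfect cones and is proper relative to the boundaries. It therefore induces a map on compactly supported cohomology of the quotients only after a transfer or shuffle symmetrisation over the ways of splitting $\ZZ^{g_1+g_2}=L_1\oplus L_2$. In dual (homological) terms this becomes a \emph{coproduct} summing over decompositions of a lattice with form into orthogonal direct sums. The \emph{other} operation comes from restriction to faces: a cone of a form $Q$ on $\ZZ^g$ with a saturated sublattice $L$ on which $Q$ degenerates, giving quotient forms on $L$ and $\ZZ^g/L$. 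I would make this operation the coproduct on $\mathcal{H}_c$, defined by summing over flags $0\subset L\subset \ZZ^g$, exactly parallel to the insertion/contraction structure on $\cGC_2$ recalled in \S\ref{sect: cochains}. The bigrading $(k,g)$ is additive under both operations because the degree shift $k+g$ is additive ($d_{g_1}+d_{g_2}+g_1g_2=d_{g_1+g_2}$ accounts for the off-diagonal block dimensions). Cocommutativity should follow from the symmetry $L_1\oplus L_2\cong L_2\oplus L_1$ with the Koszul sign.

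Then I would verify the axioms: chain maps (compatibility with the cellular differential, using that faces of perfect cones are perfect cones), associativity/coassociativity, and the bialgebra compatibility. The compatibility reduces to a combinatorial identity: intersecting a flag with a direct sum decomposition gives a bijection. The antipode is then automatic, since the algebra is connected and graded ($g=0$ piece is $\RR$) by the Takeuchi/Milnor--Moore argument.

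The main obstacle is making the coproduct well defined on compactly supported cohomology, as opposed to on chains of a noncompact orbifold. One needs the Borel--Serre or Satake boundary strata indexed by sublattices $L$ to decompose as products of lower-rank locally symmetric spaces up to a contractible unipotent fibre. For $\GL_g$ these are the parabolic factors $\GL_{r}\times\GL_{g-r}$. One also needs the orientation sign conventions to be consistent across strata. I would first try to handle this cellularly on the perfect cone complex, where boundary faces are explicit, rather than on the Borel--Serre compactification, and only at the end compare with the de Rham description of \cite{BrBord}.
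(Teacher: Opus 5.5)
The paper does not reprove this statement. It takes the Hopf structures from \cite{AMP,BCGP}, noting that the coproduct comes from block-diagonal sum and that the multiplication is expected to be the map induced by the inclusion of the Borel--Serre boundary. Your ingredients are the right ones, but you have assigned the two operations the wrong way round, and with your assignment neither one is well defined.

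Write $X_g=\SPD_g/\GL_g(\ZZ)$. The block-sum map $X_{g_1}\times X_{g_2}\to X_{g_1+g_2}$ is proper (by Mahler's criterion plus Hermite's inequality). Pulling back along it and applying K\"unneth gives $H^n_c(X_g)\to\bigoplus_{n_1+n_2=n}H^{n_1}_c(X_{g_1})\otimes H^{n_2}_c(X_{g_2})$. This is a \emph{coproduct}: it is additive in $k$ with no correction term, it is the transpose of the chain-level product $(\sigma_1,\sigma_2)\mapsto\sigma_1\oplus\sigma_2$ on perfect cones, and it is cocommutative precisely because $\oplus$ is symmetric.

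It cannot be turned into a degree-preserving product. The image is closed of codimension $g_1g_2$, so there is no transfer, only a Gysin map raising degree by $g_1g_2$. Cellularly, extending a cochain by zero from the closed subcomplex of decomposable cones is not a cochain map: in rank $2$ the diagonal cone is a facet of the indecomposable cone spanned by $e_1e_1^\Transpose$, $e_2e_2^\Transpose$, $(e_1+e_2)(e_1+e_2)^\Transpose$.

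Your degree check hides this problem. $H^{k+g}_c$ is \emph{dual} to Borel--Moore homology $H^{\mathrm{BM}}_{k+g}$ (relative homology of the Voronoi complex). It is only (twisted) Poincar\'e-\emph{isomorphic} to $H_{d_g-k-g}$. In that latter picture your identity $d_{g_1}+d_{g_2}+g_1g_2=d_{g_1+g_2}$ forces $k=k_1+k_2+g_1g_2$, so it breaks additivity rather than establishing it.

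Conversely, the flag/parabolic structure yields the \emph{product}, because classes flow from the boundary into compact supports. The Borel--Serre face attached to the parabolic with Levi $\GL_{g_1}\times\GL_{g_2}$ is, up to finite quotients, a $g_1g_2$-torus bundle over $(\RR_{>0}\backslash X_{g_1})\times(\RR_{>0}\backslash X_{g_2})$, taking the compactification of $\RR_{>0}\backslash X_g$. Pull back along this bundle, extend by zero into the boundary, and apply the connecting map $H^{\bullet}(\partial)\to H^{\bullet+1}_c$. The result is a degree-preserving map $H^{n_1}_c(X_{g_1})\otimes H^{n_2}_c(X_{g_2})\to H^{n_1+n_2}_c(X_{g_1+g_2})$, which is the multiplication the paper refers to.

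The opposite direction is unavailable, since $H^\bullet(\bar X,\partial\bar X)\to H^\bullet(\bar X)\to H^\bullet(\partial\bar X)$ is zero by exactness. Your own analogy points the same way. On $\cGC_2$, which receives $\Prim\mathcal{H}_c^\Delta$ via $\lambda^*$ as a Lie algebra map, subgraph insertion is the Lie \emph{bracket}, i.e.\ the image of commutators of the product, not a coproduct.

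Your cocommutativity argument has the same inconsistency. It uses the symmetry of $\oplus$, which in your set-up governs the product, whereas your flag coproduct is intrinsically asymmetric (sublattice versus quotient). At best you would obtain a commutative but not cocommutative structure. A commutative product also kills the commutator bracket on primitives, which is incompatible with the free Lie algebra $\Lie(\omega^5_c,\omega^9_c,\ldots)$ of Theorem~\ref{intro: thm1}.

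The fix is to swap the roles: take the coproduct from pull-back along block sum, and the product from the parabolic boundary strata, or simply use the constructions of \cite{AMP,BCGP}.
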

Such Hopf algebra structures have been constructed independently in \cite{AMP} and \cite{BCGP}. It is strongly suspected that these two Hopf algebras are one and the same. The coproduct arises from block-diagonal sum, and we expect that the multiplication agrees with the natural map on cohomology induced from the inclusion of the boundary of the (reductive) Borel--Serre compactification, as first suggested in \cite{LeeUnstable}.

It is shown in \cite[Theorem~1.2]{BCGP} that the image of \eqref{Omegactocompactcohom} is primitive for the coproduct on $\mathcal{H}_c$. Since the primitives in a cocommutative Hopf algebra form a Lie algebra under the commutator, any Hopf algebra structure as in theorem~\ref{thm:GL-Hopf} induces in particular a map from the free (bi-)graded Lie algebra on the elements \eqref{omegacompacttypeclasses}:
\[  \Lie(\omega_c^5, \omega_c^9,  \omega_c^{13}, \ldots ) \To  \Prim \, \mathcal{H}_c\ . \]
Our goal is to show to that this map is injective, and that its image is naturally isomorphic to the motivic Lie algebra $\gm$. Note that the image lies in the `diagonal' Hopf subalgebra
\begin{equation*}
    \mathcal{H}^{\Delta}_c \defas \bigoplus_{g\geq 0}  H^{2g}_c(\SPD_g/\GL_g(\ZZ); \RR) \ .
\end{equation*}

\subsection{Moduli spaces of tropical curves and abelian varieties}
Let $\mathcal{M}_g^{\trop}$ denote the moduli space of tropical curves \cite{BMV}, and $L\mathcal{M}_{g}^{\trop}$ its link. The latter is denoted $\Delta_g$ in \cite{CGP} and also arises as the quotient of the simplicial closure of Outer space by $\Out(F_n)$, see \cite[\S5.2]{ConantVogtmann:ThmKont}.
The link can be obtained out of the union of the projective simplices $\sigma_G$ from \eqref{eq:graph-simplex} over all stable weighted graphs $G$ of genus $g$,
\begin{equation*}
    L\mathcal{M}_{g}^{\trop} = \bigcup_{G} \, \sigma_G / {\sim}
\end{equation*}
where $\sim$ is the quotient by the action of the group of automorphisms of $G$ and $\sigma_{G/e}$ is identified with $\sigma_{G}\cap \{x_e=0\}$.

Let $\mathcal{A}_g^{\trop}$ denote the moduli space of tropical abelian varieties of rank $g$. It is defined to be $\SPD_{g}^{rt}/\GL_g(\ZZ)$, where $\SPD_g^{rt}$ is the rational closure $\SPD_g$ whose points are positive semi-definite matrices with rational kernel, equipped with the Satake topology. Let $L\mathcal{A}_{g}^{\trop}$ denote its link as defined in \cite[\S2]{TopWeightAg}. The open locus 
\[ \mathcal{A}_g^{\circ, \trop}  = \mathcal{A}_g^{\trop} \setminus \partial\mathcal{A}_g^{\trop} \]
is homeomorphic to the locally symmetric space $\SPD_g/\GL_g(\ZZ)$. 

The tropical Torelli map sends a stable metric graph $G$ of genus $g$ to the class of its graph Laplacian matrix, whose entries $x_e$ equal the length $\ell_e$ of each edge $e$: 
\begin{gather*}
    \lambda\colon  \mathcal{M}_g^{\trop} \To \mathcal{A}_g^{\trop} \\ 
    G \mapsto  \big[ \Lambda_G \oplus 0^{w(G)}\big]
\end{gather*}
where $w(G)$ denotes the sum of all vertex weights of $G$. 

In order to define a  tropical Torelli map on links, we must consider the reduced moduli space of tropical curves $\mathcal{M}^{\red}_g$ whose cells are indexed by 3-edge connected graphs (with no vertex weights). With this restriction, $\lambda$ induces a map  on links
$\lambda\colon  L\mathcal{M}_g^{\red} \rightarrow   L\mathcal{A}_g^{\trop}$. The  forms $\omega^{4k+1}$ do not define differential forms on the moduli space of tropical abelian varieties $\mathcal{A}^{\trop}_g$ since they have poles along the locus where $\det(X)$ vanishes.  Nevertheless, by blowing up boundary components at infinity one may construct bordifications  \cite{BrBord}
\[ \pi_{\mathcal{A}}\colon   L\mathcal{A}_g^{\trop,\mathbb{B}}\rightarrow L\mathcal{A}_g^{\trop}  
\qquad \ , \qquad
\pi_{\mathcal{M}}\colon   L\mathcal{M}_g^{\red,\mathbb{B}}\rightarrow L\mathcal{M}_g^{\red}
\]
and show that the pull-back $\pi_{\mathcal{A}}^* (\omega^{4k+1})$  of $\omega^{4k+1}$ extends to the boundary of $ L\mathcal{A}_g^{\trop,\mathbb{B}}$, along which it vanishes.
One can show \cite[Appendix]{BrBord} that $ L\mathcal{A}_g^{\trop,\mathbb{B}}$ is in fact homeomorphic to the Borel--Serre compactification \cite{BorelSerre}.  

The tropical Torelli map can be extended to the bordifications \cite{BrBord} and induces a map on their relative de Rham cohomology (in the sense of \emph{loc.\ cit.}): 
\begin{equation*}
    H^{n}_c ( L\mathcal{A}_g^{\circ, \trop}) 
    \cong H^{n}(  L\mathcal{A}_g^{\trop,\mathbb{B}}, \partial L\mathcal{A}_g^{\trop,\mathbb{B}})
    \overset{\lambda^*}{\To}  
    H^{n} ( L\mathcal{M}_g^{\red,\mathbb{B}}, \partial  L\mathcal{M}_g^{\red,\mathbb{B}})
\end{equation*}
where  $H^{n}_c (  L\mathcal{A}_g^{\circ, \trop})\cong H_c^{n+1}(\SPD_g/\GL_g(\ZZ))$. The map $\lambda^*$ sends the relative de Rham cohomology class $[\omega_c^{4k+1}]= ( \pi_{\mathcal{A}}^*(\omega^{4k+1}), 0)$ to the relative cohomology class $( \pi_{\mathcal{M}}^*(\omega^{4k+1}), 0)$. 
The latter pairs with relative homology 
\begin{equation*}
    H_n (  L\mathcal{M}_g^{\red,\mathbb{B}}, \partial  L\mathcal{M}_g^{\red,\mathbb{B}})
    \cong H_{n} (  L\mathcal{M}_g^{\red}, \partial  L\mathcal{M}_g^{\red}) 
    = H_{n+1-2g}((\hGC'_2)_g )
\end{equation*}
via the canonical integrals $G\mapsto I_G(\omega^{4k+1})$  of definition \ref{defn:Ican}. The first  isomorphism in the previous equation is excision, the second is a version of \cite[Proposition 6.8]{BrBord} where $\hGC_2'$ denotes a variant of the  graph complex for 3-edge connected graphs, and $(\hGC'_2)_g$ denotes its component in genus $g$.

We deduce a linear map $ H_c^{n+1}(\SPD_g/\GL_g(\ZZ);\RR) \rightarrow H^{n+1-2g}((\cGC'_2)_g )\otimes_{\QQ}\RR$. Its restriction to $n+1=g$ induces a morphism    $\Prim \, \mathcal{H}^{\Delta}_c  \otimes_{\QQ} \RR
\rightarrow H^0( \cGC'_2)\otimes_{\QQ} \RR $. It is shown in \cite[Proposition~1.8]{BCGP}  that it factors through a    morphism 
\begin{equation} \label{PrimHDeltatoGC}
    \lambda^*\colon 
    (\Prim \, \mathcal{H}^{\Delta}_c) \otimes_{\QQ} \RR
    \To H^0( \cGC_2)\otimes_{\QQ} \RR
\end{equation}
which is furthermore shown to be a morphism of graded Lie algebras, with respect to the Lie algebra structure on $\Prim\mathcal{H}_c^\Delta$ induced by the Hopf algebra structure on $\mathcal{H}_c$ defined in \cite{BCGP}. By the discussion above, it maps 
\begin{equation*}
    \lambda^*\colon \big[\omega_c^{4k+1}\big]  \mapsto   \big[\cochain^{\can}_{4k+1}\big]   \ . 
\end{equation*}

\subsection{Canonical classes in the cohomology of the general linear group}

Recall the map of Lie algebras
$
\phi\colon
H^0(\cGC_2)
\longrightarrow
(\Lie(\e_0,\e_1), \{\, , \,\})
$
from \eqref{eq:graphs-to-words}.
Composing with \eqref{PrimHDeltatoGC} gives a morphism of graded Lie algebras 
\[  \phi\lambda^* \colon  (\Prim \, \mathcal{H}^{\Delta}_c) \otimes_{\QQ} \RR
\To (\Lie(\e_0,\e_1), \{\, , \,\}) \otimes_{\QQ} \RR   \ .   \]
Recall from \eqref{gmtoLie} that the motivic Lie algebra injects $\iota\colon \gm \hookrightarrow  (\Lie(\e_0,\e_1), \{\, , \,\})$. 
\begin{thm} \label{thm: omegasinject}
    The classes $[\omega^{4k+1}_c]$ defined in \eqref{omegacompacttypeclasses} generate a free Lie algebra inside $\Prim(\mathcal{H}_c^{\Delta})$. They are mapped by $\phi\lambda^*$ into the image of the motivic Lie algebra $\gmr=\gm\otimes_{\QQ} \RR$. Furthermore, for any choice of normalised generators $\sigma_3,\sigma_5,\ldots$ of $\gm$, we have 
    \begin{equation*}
        \phi\lambda^* \left( [\omega_c^{4k+1}] \right)
        \equiv -2 (4k+1) \binom{4k}{2k} \zeta(2k+1) \, \sigma_{2k+1} 
        \pmod{[\gmr, \gmr]}
    \end{equation*}
    and hence the $\phi\lambda^* \big([\omega_c^{4k+1}]\big)$ are generators of $\gmr$. 
\end{thm}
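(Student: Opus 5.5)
The plan is to push everything through the composite $\phi\lambda^*$ and reduce to theorem~\ref{thm: maintechnical}. First I would record that $\phi\lambda^*\big([\omega_c^{4k+1}]\big)=\phi_{2k+1}$. By the discussion preceding the statement, \eqref{PrimHDeltatoGC} sends $[\omega_c^{4k+1}]$ to $[\cochain^\can_{4k+1}]$. By definition~\ref{defn: phielementdefn}, $\phi$ sends this class to $\phi_{2k+1}$. Here theorem~\ref{thm:RWeqCan} is what lets us apply the results of \cite{RossiWillwacher:Etingof} to $\phi(\cochain^\can_{4k+1})$. Theorem~\ref{thm: maintechnical} then says directly that $\phi_{2k+1}\in\gmr$, and that $[\phi_{2k+1}]=-2(4k+1)\binom{4k}{2k}\zeta(2k+1)[\sigma_{2k+1}]$ in $H_1(\gmr)$. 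Since $[\sigma_{2k+1}]$ is the class of $\sigma_{2k+1}$ for any normalised choice of generators, this is exactly the displayed congruence modulo $[\gmr,\gmr]$, and it holds independently of that choice.

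Next I would show that the $\phi_{2k+1}$ generate $\gmr$, and that they generate it freely. The coefficients $-2(4k+1)\binom{4k}{2k}\zeta(2k+1)$ are nonzero real numbers, since $\zeta(2k+1)>1$. So the $\phi_{2k+1}$ project onto a basis of the abelianisation $H_1(\gmr)$. The Lie algebra $\gmr$ is positively graded, with finite-dimensional graded pieces (weights $\geq 3$), so the graded Nakayama argument applies: elements whose images span $H_1$ generate. Since $\gmr$ is free on the $\sigma_{2k+1}$, homogeneous elements lifting a basis of $H_1$ are free generators. Hence the Lie algebra map $\Lie(\phi_3,\phi_5,\ldots)\otimes\RR\to\gmr$ is surjective. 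It is an isomorphism because both sides have the same graded dimensions: each is free on one generator in each odd degree $\geq 3$.

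Finally I would deduce freeness of the classes $[\omega_c^{4k+1}]$ inside $\Prim(\mathcal{H}_c^\Delta)$. Consider the map of graded Lie algebras $\Lie(\omega_c^5,\omega_c^9,\ldots)\otimes\RR\to\Prim(\mathcal{H}_c^\Delta)\otimes\RR$. Compose it with $\phi\lambda^*$, which is a Lie algebra morphism: \eqref{PrimHDeltatoGC} is one by \cite[Proposition~1.8]{BCGP}, and $\phi$ is one by \eqref{eq:graphs-to-words}. The composite sends $\omega_c^{4k+1}\mapsto\phi_{2k+1}$, and by the previous step it is an isomorphism onto $\gmr$. So the first map is injective. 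Its restriction to the rational free Lie algebra is then injective too, since $\Lie(\ldots)\otimes\QQ\subset\Lie(\ldots)\otimes\RR$. Throughout, the grading must be matched: $\omega_c^{4k+1}$ has genus $2k+1$, and it maps to weight $2k+1$ in $\Lie(\e_0,\e_1)$.

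The main obstacle is already absorbed into theorem~\ref{thm: maintechnical}; what remains is bookkeeping. The one point I would check carefully is that the identification of $\lambda^*[\omega_c^{4k+1}]$ with $[\cochain^\can_{4k+1}]$ matches the normalisation of \eqref{eq:graph-cocycle-def}, including the $1/|\Aut G|$ factors and orientation conventions. A sign or scalar mismatch there would change the stated constant, though not the conclusions about freeness and generation.
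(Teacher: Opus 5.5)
Your proposal is correct and follows the paper's proof. The paper combines $\lambda^*[\omega_c^{4k+1}]=[\cochain^\can_{4k+1}]$ and definition~\ref{defn: phielementdefn}, which together give $\phi\lambda^*([\omega_c^{4k+1}])=\phi_{2k+1}$, with theorem~\ref{thm: maintechnical}. Your graded-Nakayama and dimension-count argument, and the deduction of injectivity through the Lie morphism $\phi\lambda^*$, spell out bookkeeping the paper leaves implicit.
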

\begin{proof}
This is a consequence of the discussion above, definition \ref{defn: phielementdefn} and theorem \ref{thm: maintechnical}. 
\end{proof}

\begin{remark}
    The proof shows more precisely that the image of $\phi\lambda([\omega_c^{4k+1}])$ lands in $\gm$ tensored with the $\QQ$-vector space generated by single-valued multiple zeta values of weight $2k+1$. 
\end{remark}

The theorem implies that there is an injective map of graded Lie algebras $\gm\hookrightarrow \mathcal{H}_c$.
\begin{cor} \label{cor: tensoralg}
Let $x$ be an element in bidegree $(1,0)$. There is an injective map of bigraded vector spaces from the tensor algebra
\[  T \Big( \bigoplus_{k\geq 1} \QQ \, \omega_c^{4k+1} [-1]\Big) \otimes_{\QQ} \QQ[x]/(x^2) \hooklongrightarrow \mathcal{H}_c \] 
where $\bigoplus_{k\geq 1} \QQ \, \omega_c^{4k+1} [-1]$ is the bigraded vector space in which $\omega_c^{4k+1}[-1]$ has cohomological degree $4k+2$ and genus $2k+1$ (thus it is of bidegree $(2k+1,2k+1)$). It sends $x$ to a generator of $H^1_c(\SPD_1/\GL_1(\ZZ))= H^1_c(\RR_{>0})$. 
\end{cor}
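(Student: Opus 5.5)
The plan is to combine Theorem~\ref{thm: omegasinject} with the Milnor--Moore theorem, and then account separately for the genus-one class $x$. By Theorem~\ref{thm: omegasinject}, the Lie subalgebra $\mathfrak{L}\subset\Prim(\mathcal{H}_c^{\Delta})$ generated by the classes $[\omega_c^{4k+1}]$ is free on these generators. Freeness holds because its image under the Lie algebra morphism $\phi\lambda^*$ is a free Lie algebra on the images of the generators: these images generate $\gmr$ and are nonzero modulo $[\gmr,\gmr]$ in distinct weights, and $\gmr$ is free.

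The first step is the tensor factor. Since $\mathcal{H}_c$ is a graded-cocommutative Hopf algebra (Theorem~\ref{thm:GL-Hopf}), the inclusion $\mathfrak{L}\hookrightarrow\Prim\mathcal{H}_c$ induces an algebra map $U(\mathfrak{L})\to\mathcal{H}_c$. By Milnor--Moore (or just PBW plus the fact that a Hopf map injective on primitives is injective), this map is injective. Moreover $U(\mathfrak{L})\cong T\big(\bigoplus_k\QQ\,\omega_c^{4k+1}\big)$ because $\mathfrak{L}$ is free. This gives the tensor-algebra factor with the stated bidegrees: $\omega_c^{4k+1}$ lies in $H_c^{4k+2}$ of the genus-$(2k+1)$ summand, hence in bidegree $(2k+1,2k+1)$ in the $(k,g)$-indexing of $\mathcal{H}_c$. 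Over $\QQ$ versus $\RR$, one takes the $\QQ$-span and notes that the map is defined over $\RR$, which suffices for injectivity.

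The second step is the factor $\QQ[x]/(x^2)$. Take $x$ to be a generator of $H^1_c(\RR_{>0})$, which is the genus-one, $k=0$ piece of $\mathcal{H}_c$. One needs three facts: $x$ is primitive (it lives in genus $1$, and the coproduct is given by block sums into lower genera, where only the unit $\mathcal{H}_c^{g=0}$ contributes); $x^2=0$ (because $x$ has odd total degree and $\mathcal{H}_c$ is graded-commutative on primitives of odd degree, or directly since $x^2$ would lie in $H^2_c$ of genus $2$ in the wrong degree); and $x$ is linearly independent of $\mathfrak{L}$ (its bidegree is $(0,1)$, whereas every element of $\mathfrak{L}$ has $k=g$).

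The third step assembles the two factors. Consider the Lie algebra $\mathfrak{L}\oplus\QQ x$ inside $\Prim\mathcal{H}_c$, viewed as a graded Lie superalgebra with $x$ odd. Since $[x,x]=2x^2=0$, its enveloping algebra is $U(\mathfrak{L}\ltimes\QQ x)$, which by PBW is isomorphic as a vector space to $U(\mathfrak{L})\otimes\Lambda(x)=T(\ldots)\otimes\QQ[x]/(x^2)$. Graded Milnor--Moore then gives injectivity into $\mathcal{H}_c$.

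The main obstacle I expect is this last step: matching the grading and sign conventions so that $x$ is genuinely an odd primitive with $x^2=0$, and controlling the brackets $[x,\omega_c^{4k+1}]$. These brackets need not vanish, but PBW does not require them to, provided the span $\mathfrak{L}+\QQ x$ is closed under bracket. If closure fails, I would instead enlarge $\mathfrak{L}$ to the Lie subalgebra generated by $\mathfrak{L}$ and $x$, and use the map $U(\mathfrak{L})\otimes\Lambda(x)\to\mathcal{H}_c$, $u\otimes x^\epsilon\mapsto u\,x^\epsilon$. Its injectivity would follow from a filtration argument by $x$-degree, using the coproduct to separate the $x$-component.
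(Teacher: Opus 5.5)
Your proposal is correct and follows essentially the paper's route. The paper combines the freeness from Theorem~\ref{thm: omegasinject} with Milnor--Moore/PBW for the connected cocommutative Hopf algebra $\mathcal{H}_c$, and refers to \cite[Corollary~7.1, Remark~7.3]{BCGP} for the factor in $x$.

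Since only an injective \emph{linear} map is claimed, you need neither $x^2=0$ nor closure of $\mathfrak{L}+\QQ x$ under the bracket. Your ``graded-commutative on odd primitives'' justification of $x^2=0$ is not valid, although $x^2=0$ does hold because $H^\bullet_c(\SPD_2/\GL_2(\ZZ);\RR)=0$. PBW for the subalgebra $\mathfrak{L}\subset\Prim\mathcal{H}_c$ with $x\notin\mathfrak{L}$, or your coproduct argument, already gives injectivity of $u\otimes x^{\epsilon}\mapsto u\,x^{\epsilon}$.
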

\begin{proof} This follows from  the Milnor--Moore  theorem for  connected, cocommutative graded Hopf algebras. See  the proof of 
   \cite[Corollary 7.1,  Remark 7.3]{BCGP}. 
\end{proof}
In fact, many other primitive classes are known (see \cite[\S9]{BCGP} and so one can deduce that a much larger algebra of canonical forms of compact type injects into $\mathcal{H}_c$). 

An interesting question is whether Theorem~\ref{intro: thm1} can be extended to show that the Lie algebra generated by \emph{all} canonical forms of compact type \eqref{Omegactocompactcohom} is also free. 

\subsection{Applications to the cohomology of \texorpdfstring{$\mathcal{A}_g$}{Ag}}
Let $\mathcal{A}_g$ be the moduli space of abelian varieties of dimension $g$. 
It was shown in \cite{BCGP} that the compactly-supported weight zero cohomology $ \bigoplus_g W_0  H_c^{\bullet+g}(\mathcal{A}_g)$ carries the structure of a bigraded graded-cocommutative Hopf algebra,  related to that on $\mathcal{H}_c$. In \cite[Theorem~1.2]{BCGP} it is shown that the canonical forms of compact type inject into the primitives of this Hopf algebra, and that the Lie algebra generated by the classes $[\omega_c^{4k+1}]$, \emph{for small $k$}, generate a free Lie algebra. This was achieved by showing that \eqref{PrimHDeltatoGC} factors through a map
\begin{equation} \label{PrimHAtoGC}
    \Prim \Big( \bigoplus_g  W_0 H^{2g}_c(\mathcal{A}_g)
    \Big) 
    \To H^0( \cGC_2)
    \ ,
\end{equation}
combined with Willwacher's theorem \cite{WillwacherGRT} that $H^0( \cGC_2)\cong\grt_1$ is the Grothendieck-Teichm\"uller Lie algebra, and exploiting computer calculations which verify that $\gm$ is isomorphic to $\grt_1$ in low weights.

We may now prove directly, and avoiding all mention of $\grt$, the following theorem, answering one half of \cite[Question~1.17]{BCGP}:
\begin{thm} \label{thm: Agfree} There is an embedding of the free graded Lie algebra
\begin{equation*}
    \Lie(\omega^{5}_c,\omega^9_c,\omega^{13}_c,\ldots) 
    \hooklongrightarrow
    \Prim \Big( \bigoplus_g  W_0 H^{2g}_c(\mathcal{A}_g; \RR) \Big) 
\end{equation*}
such that its  composition with the  map  
\begin{equation*}
    \Prim  \Big(  \bigoplus_g  W_0 H^{2g}_c(\mathcal{A}_g)\Big) 
    \To
    (\Lie(\e_0,\e_1), \{ \ , \} )
\end{equation*}
which follows from  composing \eqref{PrimHAtoGC} with  the map $\phi$ of \S\ref{sect: phi}, 
induces an isomorphism of graded Lie algebras: 
\[   \Lie(\omega^{5}_c,\omega^9_c,\omega^{13}_c,\ldots) \otimes_{\QQ} \RR \cong \gmr  \ . \]
\end{thm}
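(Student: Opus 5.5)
The plan is to mirror the proof of theorem~\ref{thm: omegasinject}, replacing $\mathcal{H}^\Delta_c$ by the weight-zero Hopf algebra $\bigoplus_g W_0H^{\bullet+g}_c(\mathcal{A}_g)$ of \cite{BCGP}.

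First, by \cite[Theorem~1.2]{BCGP} the canonical forms of compact type, in particular the classes $[\omega^{4k+1}_c]$, define primitive elements of $\bigoplus_g W_0H^{2g}_c(\mathcal{A}_g;\RR)$. Since the primitives of a cocommutative Hopf algebra form a Lie algebra under the commutator, the universal property of the free Lie algebra gives a canonical morphism of graded Lie algebras
\[
j\colon \Lie(\omega^5_c,\omega^9_c,\ldots)\otimes_\QQ\RR\To \Prim\Big(\bigoplus_g W_0H^{2g}_c(\mathcal{A}_g;\RR)\Big).
\]
Second, I would use that \eqref{PrimHAtoGC} is a morphism of graded Lie algebras, again from \cite{BCGP}. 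It is compatible with \eqref{PrimHDeltatoGC}, so it sends the class of $\omega_c^{4k+1}$ to $[\cochain^\can_{4k+1}]$. Composing with Willwacher's Lie morphism $\phi$ from \eqref{eq:graphs-to-words} yields a Lie algebra morphism
\[
\Psi\colon \Lie(\omega^5_c,\omega^9_c,\ldots)\otimes\RR\To(\Lie(\e_0,\e_1),\{\,,\})\otimes\RR
\]
with $\Psi(\omega^{4k+1}_c)=\phi_{2k+1}$ by definition~\ref{defn: phielementdefn}.

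Third, I would apply theorem~\ref{thm: maintechnical}. Each $\phi_{2k+1}$ lies in $\gmr$, so the image of $\Psi$ is contained in $\gmr$ because $\gmr$ is a Lie subalgebra for the Ihara bracket. Moreover $[\phi_{2k+1}]=c\,\zeta(2k+1)[\sigma_{2k+1}]$ in $H_1(\gmr)$ with $c\neq0$, and $\zeta(2k+1)\neq 0$. A morphism from a free graded Lie algebra to a free graded Lie algebra, with finite-dimensional graded pieces and generators in degrees $\geq 3$, which induces an isomorphism on abelianisations $H_1$ is surjective by a graded Nakayama argument. It is then an isomorphism, since both sides have the same graded dimensions (generators in the same degrees $2k+1$). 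Hence $\Psi\colon \Lie(\omega_c^{\bullet})\otimes\RR\overset{\sim}{\to}\gmr$.

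Finally, injectivity of $j$ follows because $\Psi$ factors through $j$ and is injective. The delicate point is the second step: I must ensure that the image of $[\omega_c^{4k+1}]$ in $W_0H_c(\mathcal{A}_g)$ maps under \eqref{PrimHAtoGC} to $[\cochain^\can_{4k+1}]$ and not to some other class. I would settle this by citing the factorisation of \eqref{PrimHDeltatoGC} through \eqref{PrimHAtoGC} established in \cite{BCGP}, together with its compatibility with the Lie brackets. The remaining steps are formal.
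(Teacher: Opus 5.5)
Your proposal is correct and follows essentially the same route as the paper. The primitivity of $[\omega_c^{4k+1}]$ in $\bigoplus_g W_0H^{2g}_c(\mathcal{A}_g)$ and the Lie-compatible factorisation through \eqref{PrimHAtoGC} are imported from \cite{BCGP}, and theorem~\ref{thm: maintechnical} then shows that the $\phi_{2k+1}$ generate $\gmr$, which forces the free Lie algebra to map isomorphically onto $\gmr$ and hence injectively into the primitives; you also correctly isolate, as the paper does, that this $W_0$-input from \cite{BCGP} is the only non-formal ingredient beyond theorem~\ref{thm: omegasinject}.
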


Note that this theorem is not a formal consequence of theorem~\ref{thm: omegasinject} alone---the fact quoted above that the classes $[\omega^{4k+1}_c]$ lift to the cohomology $\bigoplus_g  W_0 H^{2g}_c(\mathcal{A}_g)$ required understanding their precise behaviour with respect to the inflation sequence of \cite{TopWeightAg}.

\printbibliography

\appendix

\section{Explicit canonical cocycles}\label{sec:explicit-cocycles}

We computed explicit expressions for the canonical cocycles $\cochain^\can_{4k+1}\in\cGC_2\otimes\RR$ for $k\in\{1,2,3\}$. These are provided in table~\ref{tab:can357} and the ancillary files \texttt{cocycleH.txt} where $\texttt{H}=h_G=2k+1$ denotes the first Betti number of the corresponding graphs. These cocycles \eqref{eq:graph-cocycle-def} are linear combinations of oriented graphs such that $\langle \cochain^\can_{4k+1},G\rangle = I^\can_G$ under the identification $\cGC_2\cong\Hom(\hGC_2,\QQ)$, for all oriented graphs $G$ with $h_G=2k+1$. So
\begin{equation*}
    \cochain^\can_{4k+1}=\sum_{G\ \text{with}\ h_G=2k+1}
    \frac{I^\can_{[G,o]}}{|\Aut(G)|} [G,o]
\end{equation*}
is a sum over all isomorphism classes of graphs with $4k+2$ edges and $2k+2$ vertices, equipped with an arbitrary orientation (i.e.\ the sum is over a basis of $\gr_{2k+1}\hGC_2$). Any such graph which does not appear in the files (or table~\ref{tab:can357}) has $I^\RW_G=I^\can_G=0$.
In the files we encode oriented graphs as ordered edge lists $[e_1,\ldots,e_{4k+2}]$, with the implied orientation $o=e_1\wedge\ldots\wedge e_{4k+2}$. In table~\ref{tab:can357} we denote an edge $e=\{i,j\}$ simply as $ij$.

These cocycles were computed by calculating the integrals $I^\RW_G$ for each graph with the given number of vertices and edges. For this we adapted the {\kontsevint} code from \cite{BanksPanzerPym:MZVinDQ}, which is available at \url{https://bitbucket.org/PanzerErik/kontsevint/}. Since in weights $3,5$, and $7$ the space of single-valued multiple zeta values has dimension one, we find in each case that $I^\RW_G=q_G\cdot \zeta(2k+1)$ for some rational number $q_G\in\QQ$.

At $k=1$ there is only a single graph, the complete graph $K_4$ (wheel with 3 spokes). The cocycle $\cochain^\can_9$ at $k=2$ is also well-known and consists of two graphs; their canonical integrals were computed in \cite[\S10.2]{BrInvariant}. Our result for the next canonical cocycle $\cochain^\can_{13}$ is new. It is supported on 56 non-isomorphic graphs. It differs from the (non-canonical) cocycle $\boldsymbol\gamma_7$ provided in \cite{BuringKiselevRutten:Heptagon}.

\begin{table}
    {\scriptsize\begin{tabular}{lrr}
\toprule
\small $[G,o]$ & \small $|\Aut(G)|$ & \small $I^\can_{[G,o]}$ \\
\midrule
$12 \wedge 13 \wedge 14 \wedge 23 \wedge 24 \wedge 34$ & 24\ \ & $60 \cdot\zeta(3)$ \\
\midrule
$\llap{$Z_5=\,$} 13 \wedge 14 \wedge 15 \wedge 16 \wedge 24 \wedge 25 \wedge 26 \wedge 35 \wedge 36 \wedge 46$ & 2\ \ & $-630 \cdot\zeta(5)$ \\
$\llap{$W_5=\,$} 13 \wedge 14 \wedge 16 \wedge 24 \wedge 25 \wedge 26 \wedge 35 \wedge 36 \wedge 46 \wedge 56$ & 10\ \ & $1260 \cdot\zeta(5)$ \\
\midrule
$13 \wedge 15 \wedge 16 \wedge 18 \wedge 24 \wedge 26 \wedge 27 \wedge 28 \wedge 35 \wedge 37 \wedge 46 \wedge 47 \wedge 58 \wedge 68$ & 2\ \ & $9009 \cdot\zeta(7)$ \\
$13 \wedge 15 \wedge 16 \wedge 24 \wedge 26 \wedge 27 \wedge 28 \wedge 35 \wedge 37 \wedge 38 \wedge 46 \wedge 47 \wedge 58 \wedge 68$ & 1\ \ & $-3003 \cdot\zeta(7)$ \\
$14 \wedge 15 \wedge 16 \wedge 18 \wedge 25 \wedge 26 \wedge 27 \wedge 28 \wedge 36 \wedge 37 \wedge 38 \wedge 47 \wedge 48 \wedge 58$ & 1\ \ & $9009/2 \cdot\zeta(7)$ \\
$14 \wedge 15 \wedge 16 \wedge 18 \wedge 25 \wedge 26 \wedge 27 \wedge 36 \wedge 37 \wedge 38 \wedge 47 \wedge 48 \wedge 58 \wedge 68$ & 1\ \ & $-3003/2 \cdot\zeta(7)$ \\
$14 \wedge 15 \wedge 16 \wedge 18 \wedge 25 \wedge 26 \wedge 27 \wedge 36 \wedge 37 \wedge 38 \wedge 47 \wedge 48 \wedge 58 \wedge 78$ & 2\ \ & $-9009 \cdot\zeta(7)$ \\
$14 \wedge 15 \wedge 16 \wedge 18 \wedge 25 \wedge 27 \wedge 28 \wedge 36 \wedge 37 \wedge 38 \wedge 47 \wedge 48 \wedge 56 \wedge 78$ & 2\ \ & $3003 \cdot\zeta(7)$ \\
$14 \wedge 15 \wedge 16 \wedge 25 \wedge 26 \wedge 27 \wedge 28 \wedge 36 \wedge 37 \wedge 38 \wedge 47 \wedge 48 \wedge 58 \wedge 68$ & 2\ \ & $-6006 \cdot\zeta(7)$ \\
$14 \wedge 15 \wedge 17 \wedge 18 \wedge 24 \wedge 26 \wedge 27 \wedge 28 \wedge 35 \wedge 36 \wedge 37 \wedge 46 \wedge 48 \wedge 58$ & 2\ \ & $3003/2 \cdot\zeta(7)$ \\
$14 \wedge 15 \wedge 17 \wedge 18 \wedge 24 \wedge 26 \wedge 27 \wedge 28 \wedge 35 \wedge 36 \wedge 37 \wedge 47 \wedge 58 \wedge 68$ & 2\ \ & $-3003 \cdot\zeta(7)$ \\
$14 \wedge 15 \wedge 17 \wedge 18 \wedge 24 \wedge 26 \wedge 27 \wedge 28 \wedge 35 \wedge 36 \wedge 38 \wedge 46 \wedge 48 \wedge 57$ & 1\ \ & $-3003 \cdot\zeta(7)$ \\
$14 \wedge 15 \wedge 17 \wedge 18 \wedge 24 \wedge 26 \wedge 27 \wedge 28 \wedge 35 \wedge 36 \wedge 38 \wedge 46 \wedge 57 \wedge 58$ & 1\ \ & $-3003 \cdot\zeta(7)$ \\
$14 \wedge 15 \wedge 17 \wedge 18 \wedge 24 \wedge 26 \wedge 27 \wedge 35 \wedge 36 \wedge 37 \wedge 38 \wedge 46 \wedge 48 \wedge 58$ & 1\ \ & $3003 \cdot\zeta(7)$ \\
$14 \wedge 15 \wedge 17 \wedge 18 \wedge 24 \wedge 26 \wedge 27 \wedge 35 \wedge 36 \wedge 37 \wedge 38 \wedge 47 \wedge 58 \wedge 68$ & 1\ \ & $-3003 \cdot\zeta(7)$ \\
$14 \wedge 15 \wedge 17 \wedge 18 \wedge 24 \wedge 26 \wedge 27 \wedge 35 \wedge 36 \wedge 37 \wedge 46 \wedge 48 \wedge 58 \wedge 68$ & 2\ \ & $6006 \cdot\zeta(7)$ \\
$14 \wedge 15 \wedge 17 \wedge 18 \wedge 24 \wedge 26 \wedge 27 \wedge 35 \wedge 36 \wedge 38 \wedge 46 \wedge 57 \wedge 58 \wedge 68$ & 1\ \ & $3003 \cdot\zeta(7)$ \\
$14 \wedge 15 \wedge 17 \wedge 18 \wedge 25 \wedge 26 \wedge 27 \wedge 28 \wedge 36 \wedge 37 \wedge 38 \wedge 46 \wedge 48 \wedge 57$ & 1\ \ & $-3003/2 \cdot\zeta(7)$ \\
$14 \wedge 15 \wedge 17 \wedge 18 \wedge 25 \wedge 26 \wedge 27 \wedge 36 \wedge 37 \wedge 38 \wedge 46 \wedge 47 \wedge 58 \wedge 68$ & 2\ \ & $-3003/2 \cdot\zeta(7)$ \\
$14 \wedge 15 \wedge 17 \wedge 18 \wedge 25 \wedge 26 \wedge 27 \wedge 36 \wedge 37 \wedge 38 \wedge 47 \wedge 48 \wedge 58 \wedge 68$ & 1\ \ & $3003 \cdot\zeta(7)$ \\
$14 \wedge 15 \wedge 17 \wedge 18 \wedge 25 \wedge 26 \wedge 28 \wedge 36 \wedge 37 \wedge 38 \wedge 47 \wedge 48 \wedge 57 \wedge 68$ & 1\ \ & $-6006 \cdot\zeta(7)$ \\
$14 \wedge 15 \wedge 17 \wedge 18 \wedge 25 \wedge 26 \wedge 28 \wedge 36 \wedge 37 \wedge 38 \wedge 47 \wedge 48 \wedge 58 \wedge 67$ & 1\ \ & $3003 \cdot\zeta(7)$ \\
$14 \wedge 15 \wedge 17 \wedge 18 \wedge 25 \wedge 26 \wedge 28 \wedge 36 \wedge 37 \wedge 38 \wedge 47 \wedge 48 \wedge 58 \wedge 68$ & 1\ \ & $12012 \cdot\zeta(7)$ \\
$14 \wedge 15 \wedge 17 \wedge 24 \wedge 26 \wedge 27 \wedge 28 \wedge 35 \wedge 36 \wedge 37 \wedge 38 \wedge 46 \wedge 48 \wedge 58$ & 1\ \ & $-3003 \cdot\zeta(7)$ \\
$14 \wedge 15 \wedge 17 \wedge 24 \wedge 26 \wedge 27 \wedge 28 \wedge 35 \wedge 36 \wedge 37 \wedge 46 \wedge 48 \wedge 58 \wedge 78$ & 1\ \ & $-9009/2 \cdot\zeta(7)$ \\
$14 \wedge 15 \wedge 17 \wedge 24 \wedge 26 \wedge 27 \wedge 28 \wedge 35 \wedge 36 \wedge 38 \wedge 46 \wedge 48 \wedge 57 \wedge 58$ & 2\ \ & $3003 \cdot\zeta(7)$ \\
$14 \wedge 15 \wedge 17 \wedge 24 \wedge 26 \wedge 27 \wedge 35 \wedge 36 \wedge 37 \wedge 38 \wedge 46 \wedge 48 \wedge 58 \wedge 78$ & 1\ \ & $-3003/2 \cdot\zeta(7)$ \\
$14 \wedge 15 \wedge 17 \wedge 25 \wedge 26 \wedge 27 \wedge 28 \wedge 35 \wedge 36 \wedge 37 \wedge 38 \wedge 46 \wedge 48 \wedge 58$ & 4\ \ & $-6006 \cdot\zeta(7)$ \\
$14 \wedge 15 \wedge 17 \wedge 25 \wedge 26 \wedge 27 \wedge 28 \wedge 36 \wedge 37 \wedge 38 \wedge 47 \wedge 48 \wedge 58 \wedge 68$ & 1\ \ & $-3003 \cdot\zeta(7)$ \\
$14 \wedge 15 \wedge 17 \wedge 25 \wedge 26 \wedge 27 \wedge 36 \wedge 37 \wedge 38 \wedge 47 \wedge 48 \wedge 58 \wedge 68 \wedge 78$ & 2\ \ & $-9009 \cdot\zeta(7)$ \\
$14 \wedge 15 \wedge 17 \wedge 25 \wedge 26 \wedge 28 \wedge 36 \wedge 37 \wedge 38 \wedge 47 \wedge 48 \wedge 57 \wedge 58 \wedge 68$ & 1\ \ & $6006 \cdot\zeta(7)$ \\
$14 \wedge 15 \wedge 17 \wedge 25 \wedge 26 \wedge 28 \wedge 36 \wedge 37 \wedge 38 \wedge 47 \wedge 48 \wedge 57 \wedge 68 \wedge 78$ & 2\ \ & $-18018 \cdot\zeta(7)$ \\
$14 \wedge 15 \wedge 17 \wedge 25 \wedge 26 \wedge 28 \wedge 36 \wedge 37 \wedge 38 \wedge 47 \wedge 48 \wedge 58 \wedge 67 \wedge 78$ & 2\ \ & $-9009 \cdot\zeta(7)$ \\
$14 \wedge 15 \wedge 18 \wedge 25 \wedge 26 \wedge 27 \wedge 28 \wedge 36 \wedge 37 \wedge 38 \wedge 47 \wedge 48 \wedge 57 \wedge 68$ & 1\ \ & $-3003 \cdot\zeta(7)$ \\
$14 \wedge 15 \wedge 18 \wedge 25 \wedge 26 \wedge 27 \wedge 28 \wedge 36 \wedge 37 \wedge 38 \wedge 47 \wedge 48 \wedge 58 \wedge 68$ & 1\ \ & $-12012 \cdot\zeta(7)$ \\
$14 \wedge 15 \wedge 18 \wedge 25 \wedge 26 \wedge 28 \wedge 36 \wedge 37 \wedge 38 \wedge 47 \wedge 48 \wedge 58 \wedge 68 \wedge 78$ & 14\ \ & $-24024 \cdot\zeta(7)$ \\
$14 \wedge 16 \wedge 17 \wedge 18 \wedge 25 \wedge 26 \wedge 27 \wedge 28 \wedge 35 \wedge 37 \wedge 38 \wedge 46 \wedge 47 \wedge 58$ & 2\ \ & $-3003 \cdot\zeta(7)$ \\
$14 \wedge 16 \wedge 17 \wedge 18 \wedge 25 \wedge 26 \wedge 27 \wedge 28 \wedge 35 \wedge 37 \wedge 38 \wedge 46 \wedge 48 \wedge 58$ & 1\ \ & $-3003 \cdot\zeta(7)$ \\
$14 \wedge 16 \wedge 17 \wedge 18 \wedge 25 \wedge 26 \wedge 28 \wedge 35 \wedge 37 \wedge 38 \wedge 46 \wedge 47 \wedge 58 \wedge 68$ & 1\ \ & $-12012 \cdot\zeta(7)$ \\
$14 \wedge 16 \wedge 17 \wedge 18 \wedge 25 \wedge 26 \wedge 28 \wedge 35 \wedge 37 \wedge 38 \wedge 47 \wedge 48 \wedge 56 \wedge 58$ & 1\ \ & $-3003 \cdot\zeta(7)$ \\
$14 \wedge 16 \wedge 17 \wedge 18 \wedge 25 \wedge 26 \wedge 28 \wedge 35 \wedge 37 \wedge 38 \wedge 47 \wedge 48 \wedge 56 \wedge 68$ & 2\ \ & $9009 \cdot\zeta(7)$ \\
$14 \wedge 16 \wedge 17 \wedge 18 \wedge 25 \wedge 26 \wedge 28 \wedge 36 \wedge 37 \wedge 38 \wedge 47 \wedge 48 \wedge 57 \wedge 58$ & 1\ \ & $9009/2 \cdot\zeta(7)$ \\
$14 \wedge 16 \wedge 17 \wedge 25 \wedge 26 \wedge 27 \wedge 28 \wedge 35 \wedge 36 \wedge 37 \wedge 38 \wedge 47 \wedge 48 \wedge 58$ & 2\ \ & $3003 \cdot\zeta(7)$ \\
$14 \wedge 16 \wedge 17 \wedge 25 \wedge 26 \wedge 27 \wedge 28 \wedge 35 \wedge 37 \wedge 38 \wedge 46 \wedge 48 \wedge 58 \wedge 68$ & 1\ \ & $9009 \cdot\zeta(7)$ \\
$14 \wedge 16 \wedge 17 \wedge 25 \wedge 26 \wedge 27 \wedge 35 \wedge 36 \wedge 38 \wedge 47 \wedge 48 \wedge 57 \wedge 58 \wedge 68$ & 1\ \ & $-3003/2 \cdot\zeta(7)$ \\
$14 \wedge 16 \wedge 17 \wedge 25 \wedge 26 \wedge 27 \wedge 35 \wedge 37 \wedge 38 \wedge 46 \wedge 48 \wedge 57 \wedge 58 \wedge 68$ & 1\ \ & $-3003 \cdot\zeta(7)$ \\
$14 \wedge 16 \wedge 17 \wedge 25 \wedge 26 \wedge 27 \wedge 35 \wedge 37 \wedge 38 \wedge 47 \wedge 48 \wedge 56 \wedge 58 \wedge 68$ & 2\ \ & $-3003 \cdot\zeta(7)$ \\
$14 \wedge 16 \wedge 17 \wedge 25 \wedge 26 \wedge 28 \wedge 35 \wedge 37 \wedge 38 \wedge 47 \wedge 48 \wedge 56 \wedge 58 \wedge 78$ & 2\ \ & $9009 \cdot\zeta(7)$ \\
$14 \wedge 16 \wedge 17 \wedge 25 \wedge 26 \wedge 28 \wedge 36 \wedge 37 \wedge 38 \wedge 47 \wedge 48 \wedge 57 \wedge 58 \wedge 68$ & 1\ \ & $9009/2 \cdot\zeta(7)$ \\
$14 \wedge 16 \wedge 17 \wedge 25 \wedge 26 \wedge 28 \wedge 36 \wedge 37 \wedge 38 \wedge 47 \wedge 48 \wedge 57 \wedge 58 \wedge 78$ & 2\ \ & $9009/2 \cdot\zeta(7)$ \\
$14 \wedge 16 \wedge 18 \wedge 25 \wedge 26 \wedge 27 \wedge 28 \wedge 35 \wedge 36 \wedge 37 \wedge 38 \wedge 47 \wedge 48 \wedge 58$ & 4\ \ & $6006 \cdot\zeta(7)$ \\
$14 \wedge 16 \wedge 18 \wedge 25 \wedge 26 \wedge 27 \wedge 28 \wedge 35 \wedge 36 \wedge 37 \wedge 47 \wedge 48 \wedge 58 \wedge 68$ & 1\ \ & $9009 \cdot\zeta(7)$ \\
$14 \wedge 16 \wedge 18 \wedge 25 \wedge 26 \wedge 27 \wedge 28 \wedge 35 \wedge 37 \wedge 38 \wedge 47 \wedge 48 \wedge 56 \wedge 58$ & 1\ \ & $6006 \cdot\zeta(7)$ \\
$14 \wedge 16 \wedge 18 \wedge 25 \wedge 26 \wedge 27 \wedge 28 \wedge 35 \wedge 37 \wedge 38 \wedge 47 \wedge 48 \wedge 56 \wedge 68$ & 1\ \ & $-15015/2 \cdot\zeta(7)$ \\
$14 \wedge 16 \wedge 18 \wedge 25 \wedge 26 \wedge 27 \wedge 28 \wedge 35 \wedge 37 \wedge 38 \wedge 47 \wedge 48 \wedge 56 \wedge 78$ & 1\ \ & $-3003/2 \cdot\zeta(7)$ \\
$14 \wedge 16 \wedge 18 \wedge 25 \wedge 26 \wedge 27 \wedge 35 \wedge 37 \wedge 38 \wedge 47 \wedge 48 \wedge 56 \wedge 58 \wedge 68$ & 1\ \ & $-3003/2 \cdot\zeta(7)$ \\
$14 \wedge 16 \wedge 18 \wedge 25 \wedge 26 \wedge 27 \wedge 35 \wedge 37 \wedge 38 \wedge 47 \wedge 48 \wedge 56 \wedge 58 \wedge 78$ & 1\ \ & $9009/2 \cdot\zeta(7)$ \\
$14 \wedge 16 \wedge 18 \wedge 25 \wedge 26 \wedge 27 \wedge 35 \wedge 37 \wedge 38 \wedge 47 \wedge 48 \wedge 56 \wedge 68 \wedge 78$ & 1\ \ & $-6006 \cdot\zeta(7)$ \\
\bottomrule
\end{tabular}
}%
    \caption{Canonical graph cocycles $\cochain^\can_5$, $\cochain^\can_9$, and $\cochain^\can_{13}$.}%
    \label{tab:can357}%
\end{table}

The above canonical cocycles map through definition~\ref{defn: phielementdefn} to elements $\phi_{2k+1}=\phi(\cochain^\can_{4k+1})\in\gm\otimes \RR$. Since in these low degrees $\gm$ has rank one (there are no commutators yet), by theorem~\ref{thm: maintechnical} we know that these cocycles can be written as
\begin{equation*}
    \phi_3 = -60\zeta(3)\cdot\sigma_3,\qquad
    \phi_5 = -1260\zeta(5)\cdot\sigma_5,\qquad
    \phi_7 = -24024\zeta(7)\cdot\sigma_7
\end{equation*}
in terms of rational generators
$\sigma_3,\sigma_5,\sigma_7\in\gm$. We calculated these by explicitly evaluating the map $\phi$ on the canonical graph cocycles. This consistency check confirmed the well-known expressions for these rational generators:
\begin{align*}
	\sigma_3 &= [\e_0, [\e_0, \e_1]] - [[\e_0, \e_1], \e_1]\\ 
	\sigma_5 &= [\e_0, [\e_0, [\e_0, [\e_0, \e_1]]]] - 2 \cdot[\e_0, [\e_0, [[\e_0, \e_1], \e_1]]] + \tfrac{3}{2} \cdot [[\e_0, [\e_0, \e_1]], [\e_0, \e_1]]\\
             &+ 2 \cdot [\e_0, [[[\e_0, \e_1], \e_1], \e_1]] + \tfrac{1}{2} \cdot [[\e_0, \e_1], [[\e_0, \e_1], \e_1]] - [[[[\e_0, \e_1], \e_1], \e_1], \e_1] \\
	\sigma_7 &= [\e_0, [\e_0, [\e_0, [\e_0, [\e_0, [\e_0, \e_1]]]]]] - 3 \cdot [\e_0, [\e_0, [\e_0, [\e_0, [[\e_0, \e_1], \e_1]]]]] \\
    &+ 5 \cdot [\e_0, [\e_0, [[\e_0, [\e_0, \e_1]], [\e_0, \e_1]]]] + 5 \cdot [\e_0, [\e_0, [\e_0, [[[\e_0, \e_1], \e_1], \e_1]]]] \\
    &- 2 \cdot [[\e_0, [\e_0, [\e_0, \e_1]]], [\e_0, [\e_0, \e_1]]] + \tfrac{19}{16} \cdot [\e_0, [\e_0, [[\e_0, \e_1], [[\e_0, \e_1], \e_1]]]] \\
    &- \tfrac{173}{16} \cdot [\e_0, [[\e_0, [[\e_0, \e_1], \e_1]], [\e_0, \e_1]]] - 5 \cdot [\e_0, [\e_0, [[[[\e_0, \e_1], \e_1], \e_1], \e_1]]] \\
    &- 2 \cdot [[\e_0, [\e_0, \e_1]], [\e_0, [[\e_0, \e_1], \e_1]]] + \tfrac{17}{16} \cdot [[[\e_0, [\e_0, \e_1]], [\e_0, \e_1]], [\e_0, \e_1]] \\
    &- \tfrac{99}{16} \cdot [\e_0, [[\e_0, \e_1], [[[\e_0, \e_1], \e_1], \e_1]]] + \tfrac{61}{16} \cdot [[\e_0, [[\e_0, \e_1], \e_1]], [[\e_0, \e_1], \e_1]] \\
    &+ \tfrac{109}{16} \cdot [[\e_0, [[[\e_0, \e_1], \e_1], \e_1]], [\e_0, \e_1]] + 3 \cdot [\e_0, [[[[[\e_0, \e_1], \e_1], \e_1], \e_1], \e_1]] \\
    &- \tfrac{65}{16} \cdot [[\e_0, \e_1], [[\e_0, \e_1], [[\e_0, \e_1], \e_1]]] + 4 \cdot [[\e_0, \e_1], [[[[\e_0, \e_1], \e_1], \e_1], \e_1]] \\
    &+ 3 \cdot [[[\e_0, \e_1], \e_1], [[[\e_0, \e_1], \e_1], \e_1]] - [[[[[[\e_0, \e_1], \e_1], \e_1], \e_1], \e_1], \e_1]
.\end{align*}

\end{document}